\newcommand*{\B}{\mathcal{B}}
\newcommand{\id}{\operatorname{id}} %identity map
\DeclareMathOperator{\Hom}{Hom}
\DeclareMathOperator{\Ext}{Ext}
\DeclareMathOperator{\Inn}{Inn}
\DeclareMathOperator{\Aut}{Aut}
\DeclareMathOperator{\Out}{Out}
\DeclareMathOperator{\Pic}{Pic}
\DeclareMathOperator{\spann}{span}
\newcommand{\cf}{\mbox{cf.}\xspace}				% confer 
\newcommand{\eg}{\mbox{e.\,g.}\xspace}			% exempli gratia	
\newcommand*{\ie}{\mbox{i.\,e.}\xspace}			% id est
\newcommand*{\etal}{\mbox{et\,al.}\xspace}		% et alii
\newcommand{\wrt}{\mbox{w.\,r.\,t.}\xspace}		% with respect to
\newcommand{\aka}{\mbox{a.\,k.\,a.}\xspace}		% also known as
\newcommand*{\ndash}{\nobreakdash-}
\newcommand*{\Star}{$^*$\ndash}
\newcommand*{\acts}{\,.\,}
\theoremstyle{plain}
	\newtheorem{teo}{Theorem}[section]
	\newtheorem{lemma}[teo]{Lemma}
	\newtheorem{cor}[teo]{Corollary}
\theoremstyle{definition}
	\newtheorem{defi}[teo]{Definition}
	\newtheorem{rem}[teo]{Remark}
	\newtheorem{example}[teo]{Example}
\begin{document}

\title[Saturated Fell bundles and their classification]{Saturated Fell bundles and their classification}
% RUNNING TITLE???
%\shorttitle{Non-Abelian extensions of groupoids}
%\author{Natã Machado, Johan Öinert, Stefan Wagner}
\date{\today}

\author{Natã Machado}
\address{Universidade Federal de Santa Catarina}
\email{nmachado.ufsc@gmail.com}

\author{Stefan Wagner}
\address{Blekinge Tekniska H\"ogskola}
\email{stefan.wagner@bth.se}

\subjclass[2020]{Primary 46L85, 46L55; Secondary 55R10, 47L65, 16D70}

\keywords{Fell bundle, saturated, Picard group, factor system, fundamental family}

\begin{abstract}
We present a comprehensive classification theory for saturated Fell bundles over locally compact groups, utilizing data associated with their base group and unit fiber. 
This framework offers a unified approach to understanding the structure and properties of such bundles, providing key insights into their classification.
\end{abstract}

\maketitle

\section{Introduction}

\subsubsection*{Fell bundles}

To unravel the structure of a given algebra, a common approach is to consider a grading on the algebra.
This provides a decomposition into homogeneous components, which are subspaces of the algebra itself but can also be studied as distinct entities. 
To put it differently, starting from a graded algebra, one can view its homogeneous components as the parts remaining after
disassembling the algebra along its grading.
Examining these components individually can provide valuable insights into the algebra's structure, revealing aspects that might otherwise remain hidden.

In this paper, we explore the concept of \emph{Fell bundles}, introduced by Fell in his foundational works \cite{Fell1, Fell2} under the name of \emph{C\Star algebraic bundles} to deal with disassembled C\Star algebras.
These objects play a central role in operator algebras, noncommutative geometry, and mathematical physics, providing a powerful framework for studying group representations as well as C\Star dynamical systems and their associated crossed~products.
Fell bundles have attracted significant interest in recent years due to their wide range of applications and have been extensively studied by many authors (see, \eg, \cite{Bi24,BuMA23,BuEch24,DuLi25,Ex96,Ex02,Ex17,kr23,Ku98,Qu96,Rae16,Rae18,Res18,SiWi13} and ref.~therein).  
For instance, they serve as a bridge between diverse areas such as quantum logic, representation theory, and noncommutative geometry.

Roughly speaking, a Fell bundle $\B$ over a locally compact group $G$ is a bundle over $G$ such that the fiber $B_e$ over the identity element $e$ of $G$ is a C\Star algebra, known as the \emph{unit fiber}, and such that the fiber $B_g$ over an element $g \in G$ is a Hilbert $B_e$-bimodule.
Additionally, $\B$ is equipped with continuous maps, a multiplication and an involution, that reflect the group operations: $B_g \cdot B_h \subseteq B_{gh}$, for $g,h \in G$, and $B^*_g\subseteq B_{g^{-1}}$, for $g \in G$.
Notably, this definition extends naturally to Fell bundles over groupoids (see, \eg, \cite{Ku98}). 

Fell bundles are not merely abstract constructs; they provide a unifying perspective on many important examples of C\Star algebras that have been the subject of intense study over the past decades. 
A particularly illuminating example is the quantum 2-torus $\mathbb{T}^2_\theta$, $\theta \in \mathbb{R}$, which can be thought of as a Fell bundle over $\mathbb{Z}^2$.
Among the examples in which the Fell bundle structure is not so obvious, lie some of the most studied C\Star algebras of the past couple of decades. 
These include:
\begin{enumerate} 
\item 
    group C\Star algebras,
\item 
    quantum SU(2), 
\item 
    noncommutative Heisenberg manifolds,
\item 
    AF-algebras,
\item 
    Cuntz-Krieger algebras,
\item 
    Graph C\Star algebras,
\item 
    and many others.
\end{enumerate}

\subsubsection*{Relation to noncommutative principal bundles}

The tremendous work of Hopf, Stiefel, and Whitney in the 1930’s demonstrated the importance of principal bundles for various applications of geometry and mathematical physics.
In the noncommutative setting the notion of a free action of a quantum group on a C\Star algebra provides a natural framework for noncommutative principal bundles (see, \eg, \cite{BaCoHa15,Ell00,SchWa17} and ref.~therein). 
Their structure theory 
%(see, \eg, \cite{BaCoHa15,CoYa13a,GaHaToWu19,Phi09,SchWa15,SchWa16,SchWa17} and ref.~therein) 
and their relation to $KK$-theory 
%(see, \eg, \cite{CacMes19,EchNeOy09,ForRen19,Phi09,SchWa17,Wahl10} and ref.~therein) 
certainly appeal to operator algebraists and functional analysts. 
Also, just like their classical counterparts, noncommutative principal bundles are becoming increasingly prevalent in various applications of geometry and mathematical~physics. 
%(see, \eg, \cite{AGI17,CacMes19,DaSi13,DaSiZu14,DaLaSiSu05,HaMa10,LaSu05,SchWa18,SchWa21,SchWa20,SiVe15} and ref.~therein). 

The study and classification of actions on C\Star algebras is a central theme in noncommutative geometry, and the experience with the commutative case suggests that free group actions are due to the lack of degeneracies easier to understand and to classify than general actions.
For instance, free actions of a compact group correspond to principal bundles over the quotient space, with locally trivial bundles being completely classified by cocycles valued in the group through Čech cohomology.

In the noncommutative setting, substantial progress has been made for compact quantum groups:
Free and ergodic actions were classified by Olesen, Pedersen, Takesaki~\cite{OlPeTa80}, and, independently, by Albeverio–Høegh-Krohn~\cite{AlHo80} for compact Abelian groups, and by Wassermann~\cite{Wass88a, Wass88b, Wass89} for compact non-Abelian groups, linking free actions to 2-cocycles of the dual group. 
An analogous result for compact quantum groups was obtained by Bichon, De Rijdt, and Vaes~\cite{BiRiVa06}.

Extending these results beyond the ergodic case becomes significantly more challenging, because, even in the case of a commutative fixed point algebra, the action cannot necessarily be decomposed into a bundle of ergodic actions. 
However, in a series of papers~\cite{SchWa15,SchWa16,SchWa17}, the first author and collaborators achieved a complete classification of free, though not necessarily ergodic, actions of compact quantum groups on unital C\Star algebras.

%The first paper provides a classification of free actions of compact Abelian groups on unital C\Star algebras, leveraging the Morita self-equivalence structure of the isotypic components. 
%This yields a bijection (up to equivalence) between these actions and (classical) second group cohomology of the dual groups.

%The second paper deals with non-Abelian compact groups. 
%The analysis is restricted to a class of actions on unital C\Star algebras known as cleft actions. 
%This simplification makes the theory more manageable while still providing interesting examples. 
%The main result is again a bijection (up to equivalence) between cleft (and hence free) actions and factor systems.

%The third paper considers actions of compact quantum groups. 
%It introduces the appropriate notions of free actions and factor systems for this setting. 
%Once again, the main result is a bijection (up to equivalence) between free actions of compact quantum groups and factor systems, the latter of which providing a computational framework for understanding noncommutative principal bundles.

In the case of a free action of a group C\Star algebra of a discrete group, there exists a very close relationship with \emph{saturated Fell bundles} (see, \eg,~\cite{Ex96,Ex15,Qu96,Rae18} and ref.~therein). 
This relationship also suggests a way of addressing noncommutative principal bundles in terms of saturated Fell bundles on locally compact groups or, more generally, on groupoids.

\subsubsection*{Aim of this paper}

In this paper, we focus on the classification of saturated Fell bundles, a subclass characterized by a well-behaved and rich symmetry in their bimodule structure (see~Definition~\ref{def:fb}). 
Building on recent advancements, we aim to establish new methods and tools for understanding their structure and classification. 
Noteworthily, the approach we adopt is, to some extent, based on the framework introduced in~\cite{SchWa15}.

\subsubsection*{Organization of the paper}

In Section~\ref{sec:preliminaries}, we set out the necessary definitions, notations, and results that will be referenced throughout the paper.

In Section~\ref{sec:construction}, we present the first key step in our classification procedure, which unfolds as follows: Let $G$ be a locally compact group and let $B$ be a unital C\Star algebra, and let $\psi:G \to \Pic (B)$ be a group homomorphism, which serves as an invariant (see Lemma~\ref{lem:pic}).
While $\psi$ does not, in general, distinguish all saturated Fell bundles over $G$ with unit fiber $B$, nor does it necessarily correspond to one, refining this invariant leads to our classifying data, which we refer to as a \emph{factor system} (see Definition~\ref{def:fs}).
%, drawing inspiration from the theory of group extensions. 
As a key result, we establish that every factor system indeed gives rise to a saturated Fell bundles over $G$ with unit fiber $B$ and Picard homomorphism $\psi$ (see Theorem~\ref{teo:fs} and Corollary~\ref{cor:fs}). 
The construction of this Fell bundle constitutes the main focus of Section~\ref{sec:construction}.

In Section~\ref{sec:classification}, we present the second key step in our classification procedure, detailed as follows:
Once again, let $G$ be a locally compact group, let $B$ a unital C\Star algebra, and let $\psi:G \to \Pic(B)$ be a group homomorphism. 
Initially, in Section~\ref{sec:classification_alg}, we treat $G$ as a discrete group and classify discrete saturated Fell bundles over $G$ with unit fiber $B$ and Picard homomorphism $\psi$. 
The main result establishes that, if such Fell bundles exist, then they are parametrized - up to 2-coboundaries - by \emph{2-cocycles on $G$ with values in $UZ(B)$}, the group of unitary elements in the center of $B$ (see Corollary~\ref{cor:class}).
%, \ie $H^2(G,UZ(B))$.
We further provide a group-theoretic criterion for the existence of saturated Fell bundles over $G$ with unit fiber $B$ and Picard homomorphism $\psi$ (see Theorem~\ref{teo:H^3}). 
Finally, in Section~\ref{sec:classification_top}, we extend our findings to the topological setting (Corollary~\ref{cor:class_top}).
Our strategy therefore involves fixing a topology and classifying Fell bundles whose underlying Banach bundle structures are equivalent with respect to it.

In Section~\ref{sec:out}, we explore a simple yet interesting class of Fell bundles, which we refer to as \emph{crossed product bundles},
characterized by the existence of a unitary element in every fiber. 
In particular, we illustrate the construction of some of these Fell bundles using classical factor systems of groups (Theorem~\ref{teo:cp}).

%In Section~\ref{sec:example}, we apply our framework to Fell line bundles.

\subsubsection*{Future directions}

We wish to mention that, with further technical refinement, the results presented here can be generalized to saturated Fell bundles over groupoids. 
Additionally, general Fell bundles may be classified using functors similar to those introduced in \cite{Ne13}.

Furthermore, we would like to point out that this research forms part of a broader program aimed at systematically studying geometric aspects of Fell bundles - an area that, despite the widespread use and versatility of Fell bundles, remains largely unexplored.
Future work will focus on advancing the noncommutative geometry aspects of Fell bundles through the application of spectral triples, which encode geometric phenomena within an analytic framework.

\section{Preliminaries}
\label{sec:preliminaries}

Our study concerns the classification of saturated Fell bundles by means of data associated with their base group and unit fiber as well as cohomological features. 
In what follows, we provide definitions, notations, and results that will be used repeatedly throughout the text.

We begin by providing some standard references. 
For a comprehensive treatment of operator algebras, we refer to the excellent works of Blackadar~\cite{BB06} and Pedersen~\cite{Ped18}. 
Our approach also deals with imprimitivity bimodules, \aka Morita equivalence bimodules; for an up-to-date account, see Raeburn and Williams~\cite{Rae98} and the monograph by Echterhoff \etal~\cite{EKQR06}. 
For the theory of Fell bundles, our primary references are the seminal volumes by Doran and Fell~\cite{Fell1,Fell2}.
For background on abstract group cohomology, particularly  cochains, cocycles, and coboundaries, we refer the reader to the influential exposition~\cite{MacLane95} by MacLane.
For the corresponding topological setting, we refer to the outstanding paper~\cite{WaW015} by Wagemann and Wockel.

From this point forward, all topological spaces are assumed to be Hausdorff.
For a map $\pi: \B \to X$ between topological spaces, the fiber at $x \in X$ is denoted by $B_x := \pi^{-1}(x)$.

The identity element of a group is written as $e$. 

Let $G$ and $N$ be two groups.
Furthermore, let $p$ be a positive integer.
A map $f:G^p \to N$ is said to be \emph{normalized} if it equals $e_N$ whenever one of its arguments is $e_G$. 
The space of all normalized maps $G^p \to N$, referred to as the $p$-\emph{cochains}, is denoted by $C^p(G,N)$.

For a unital C\Star algebra $B$, the identity element is denoted by $1_B$, while $B$, $UB$ and $UZ(B)$
denote the group of invertible elements, the group of unitary elements of $B$, and group of unitary elements of the center $Z(B)$, respectively.

\subsection{Bundles}

\subsubsection{Banach bundles}

\begin{defi}[{\cite[Chap.~II, Sec.~13.4]{Fell1}}]
A \emph{Banach bundle} is a triple $(\B,X,\pi)$, where $\B$ is a topological space, $X$ is a locally compact space, and $\pi:\B \to X$ is a continuous open surjection, together with operations and norms making each \emph{fiber} $B_x$, for $x\in X$, into a Banach space, and satisfying the following conditions:
\begin{enumerate}[label=(B\arabic*)]
\item 
    $\B \ni s \mapsto \|s\| \in \mathbb{R}$ is continuous.   
\item 
    $\{(s,t)\in \B \times \B : \pi(s)=\pi(t)\} \ni (s,t) \mapsto s+t \in \B$ is continuous.
\item 
    $\B \ni s \mapsto \lambda s \in \B$ is continuous for every $\lambda \in \mathbb{C}$.
\item 
    If $\{s_i\}$ is any net of elements in $\B$ such that $\|s_i\|\to 0$ and $\pi(s_i)\to x$, then $s_i \to 0_x.$ 
\end{enumerate}
Notably, the topology of $\B$ restricted to $B_x$, for $x\in X$,  coincides with its norm topology.
Furthermore, the space of all continuous sections of $(\B,X,\pi)$ is denoted by $\Gamma(X,\B)$.
\end{defi}

\begin{example}
\label{ex:trivialBB}
Let $X$ be a locally compact space, let $B$ be a Banach space, let $\B := X \times B$, endowed with the product topology, and let $\pi : \B \to X$ be the canonical projection onto~$X$.
It is clear that $(\B,X,\pi)$ defines a Banach bundle, referred to as the \emph{trivial Banach bundle over $X$ with constant fiber~$B$}.
\end{example}

\begin{rem}
\label{rem:enough}
Let $(\B,X,\pi)$ be a Banach bundle.
By~\cite[Chap.~II, Rem.~13.19]{Fell1}, $(\B,X,\pi)$ has \emph{enough sections}, \ie, for each $s \in \B$ there exists $\sigma \in \Gamma(X,\B)$ such that $\sigma(\pi(s)) = s$.
In particular, $\{\sigma(x):\sigma \in \Gamma(X,\B)\} = B_x$ for every $x \in X$.
\end{rem}

The construction of specific Banach bundles typically arises in the following setting:

\begin{lemma}[Fell {\cite[Sec.~13.18]{Fell1}}]
\label{lem:bb_top}
Let $\B$ be a set and let $\pi:\B \to X$ be a surjection onto a locally compact space $X$ such that every $B_x$, for $x \in X$, is a Banach space.
Furthermore, let~$\Gamma$ be a complex linear space of sections of $(\B,\pi)$ such that
\begin{enumerate}[label=(\arabic*)]
\item 
    $\{f(x):f \in \Gamma\}$ is dense in $B_x$ for every $x \in X$, and
\item 
   $X \ni x \to \Vert f(x) \Vert \in \mathbb{R}$ is continuous from $X$ to $\mathbb{R}$ for every $f \in \Gamma$.
\end{enumerate}
Then there exists a unique topology on $\B$ that makes the triple $(\B,X,\pi)$ a Banach bundle such that $\Gamma \subseteq \Gamma(X,\B)$.
\end{lemma}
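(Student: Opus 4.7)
The plan is to construct the topology directly by specifying a basis of tubes around sections in $\Gamma$. For each $f \in \Gamma$, each open $U \subseteq X$, and each $\varepsilon > 0$, set
$$T(f,U,\varepsilon) := \{s \in \B : \pi(s) \in U \text{ and } \|s - f(\pi(s))\| < \varepsilon\}.$$
First I would verify that the collection of all such $T(f,U,\varepsilon)$ is a basis for a topology on $\B$. The key point is that if $s \in T(f_1,U_1,\varepsilon_1) \cap T(f_2,U_2,\varepsilon_2)$, then by hypothesis~(1) one can pick $f_3 \in \Gamma$ with $\|s - f_3(\pi(s))\|$ arbitrarily small, and by hypothesis~(2) applied to the sections $f_3 - f_i$ one can shrink to an open neighborhood $U_3$ of $\pi(s)$ and choose $\varepsilon_3 > 0$ so that $T(f_3,U_3,\varepsilon_3)$ sits inside both tubes.

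Next I would check axioms (B1)--(B4). Since $\Gamma$ is a linear space, the zero section $x \mapsto 0_x$ lies in $\Gamma$, and the tubes $T(0,U,\varepsilon)$ form a neighborhood basis of $0_x$, which yields (B4) immediately. Norm continuity (B1) follows from the reverse triangle inequality: on $T(f,U,\varepsilon)$ the map $s \mapsto \|s\|$ differs from the continuous function $x \mapsto \|f(x)\|$ by less than $\varepsilon$. For addition (B2) and scalar multiplication (B3), given a target tube around $s_1+s_2$ or $\lambda s$, one approximates by sections from $\Gamma$ and uses its linearity together with (2) to produce tubes whose (pointwise) sum, resp.\ scalar multiple, lands in the target. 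Continuity and openness of $\pi$ are transparent from the definition, as $\pi(T(f,U,\varepsilon)) = U$. Finally, the relative topology on $B_x$ coincides with the norm topology: tubes restricted to $B_x$ are $\varepsilon$-balls around points $f(x)$, and by~(1) such balls exhaust a neighborhood basis of every element of $B_x$.

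Every $f \in \Gamma$ is continuous as a section essentially by construction: the preimage $f^{-1}(T(g,V,\delta)) = \{x \in V : \|f(x) - g(x)\| < \delta\}$ is open by~(2) applied to $f - g \in \Gamma$. For uniqueness, suppose $\tau$ is any topology making $(\B,X,\pi)$ a Banach bundle with $\Gamma \subseteq \Gamma(X,\B)$. Using (B1), continuity of $f \in \Gamma$, and continuity of $\pi$, each tube $T(f,U,\varepsilon)$ is $\tau$-open, so $\tau$ refines our topology. Conversely, given a $\tau$-open neighborhood $W$ of $s \in \B$, one invokes Remark~\ref{rem:enough} to pick $f \in \Gamma$ with $f(\pi(s)) = s$ and then uses (B1) and (B4) to find a tube $T(f,U,\varepsilon) \subseteq W$, showing the topologies coincide.

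The main obstacle is the basis verification and the continuity of the algebraic operations: both arguments require simultaneously approximating elements of neighbouring fibers by sections in $\Gamma$ while controlling norms through~(2), and care is needed to avoid circular appeals to the yet-unestablished topology. Once the tube mechanism is in place, the remaining verifications are systematic.
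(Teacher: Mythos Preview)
The paper does not supply its own proof of this lemma; it is quoted as a known result from Fell's monograph \cite[Sec.~13.18]{Fell1}. Your sketch reproduces exactly the standard tube construction given there, so in spirit you are following the cited source rather than offering an alternative route.

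One genuine slip occurs in your uniqueness paragraph: you invoke Remark~\ref{rem:enough} to select $f \in \Gamma$ with $f(\pi(s)) = s$. Remark~\ref{rem:enough} only furnishes a \emph{continuous} section through $s$, and that section need not belong to $\Gamma$; hypothesis~(1) guarantees merely that $\{f(x):f\in\Gamma\}$ is dense in $B_x$, not that it equals $B_x$. The repair is routine: pick $f \in \Gamma$ with $\|s - f(\pi(s))\|$ small, then use the $\tau$-continuity of $f$ together with (B1)--(B4) to produce a tube $T(f,U,\varepsilon)$ contained in $W$ that still contains $s$. With this adjustment the sketch is correct and matches Fell's argument.
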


\begin{defi}
\label{def:equivalence_bb} 
Two Banach bundles $(\B^1,\pi^1,X)$ and $(\B^2,\pi^2,X)$ are said to be \emph{equivalent} if there exists a homeomorphism $\Psi: \B^1 \to \B^2$ such that
\begin{enumerate}[label=(\arabic*)]
\item 
    %$\pi^2 \circ \Psi = \pi^1$, \ie, 
    $\Psi(B^1_x) \subseteq B^2_x$ for every $x \in X$, and
\item 
    $\Psi_x := \Psi_{\vert B^1_x} : B^1_x \to B^2_x$ is a linear isometry for every $x \in X$.
\end{enumerate}
Such a map $\Psi$ is called an \emph{equivalence} between $(\B^1,\pi^1,X)$ and $(\B^2,\pi^2,X)$.
\end{defi}

\begin{lemma}[{\cite[Chap.~II, Sec.~13.16 \& 13.17]{Fell1}}]
\label{lem:bb_cont}
Let $(\B^1,\pi^1,X)$ and $(\B^2,\pi^2,X)$ be Banach bundles and let $\Psi: \B^1 \to \B^2$ be a map such that 
\begin{enumerate}[label=(\arabic*)]
\item 
    $\Psi(B^1_x) \subseteq B^2_x$ for every $x \in X$, and
\item 
    $\Psi_x := \Psi_{\vert B^1_x} : B^1_x \to B^2_x$ is a linear isometry for every $x \in X$.
\end{enumerate}
Furthermore, let~$\Gamma$ be a family of continuous sections of $\pi:\B^1 \to X$ such that
\begin{enumerate}[label=(\arabic*),resume]
\item 
    $\{\sigma(x):\sigma \in \Gamma\}$ has dense linear span in $B^1_x$ for every $x \in X$, and
\item 
    $\Psi \circ \sigma$ is a continuous sections of $\pi:\B^2 \to X$ for every $\sigma \in \Gamma$.
\end{enumerate}
Then $\Psi: \B^1 \to \B^2$ is an equivalence.
\end{lemma}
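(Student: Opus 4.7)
The plan is to establish continuity of $\Psi$ first, and then to deduce that it is a homeomorphism (and hence an equivalence, by the fiberwise isometry assumption). Continuity of a fiber-preserving map between Banach bundles can be tested via the neighborhood-basis description of the Banach bundle topology implicit in Lemma~\ref{lem:bb_top}: for a net $(s_i)_{i \in I}$ in $\B^1$ with $x_i := \pi^1(s_i) \to x := \pi^1(s)$, one has $\Psi(s_i) \to \Psi(s)$ in $\B^2$ provided that, for every $\epsilon > 0$, there exists a continuous section $\rho$ of $\pi^2$ such that $\|\Psi(s) - \rho(x)\| < \epsilon$ and $\|\Psi(s_i) - \rho(x_i)\| < \epsilon$ holds eventually.

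Concretely, fix $s \in \B^1$ and a net $s_i \to s$; then $x_i \to x$ by continuity of $\pi^1$. Given $\epsilon > 0$, assumption~(3) yields $\sigma_1,\dots,\sigma_n \in \Gamma$ and $\lambda_1,\dots,\lambda_n \in \mathbb{C}$ such that $\tau := \sum_{j=1}^n \lambda_j \sigma_j$ satisfies $\|s - \tau(x)\| < \epsilon/2$. By hypothesis~(4) and continuity of the fiberwise linear operations in $\B^2$, the section $\rho := \Psi \circ \tau = \sum_{j=1}^n \lambda_j (\Psi \circ \sigma_j)$ is a continuous section of $\pi^2$. Fiberwise linearity of $\Psi$ together with the isometry property gives $\|\Psi(s) - \rho(x)\| = \|s - \tau(x)\| < \epsilon/2$, while axioms (B1)--(B3) applied in $\B^1$ show that $\|s_i - \tau(x_i)\| \to \|s - \tau(x)\|$ (since $\tau$ is continuous and $x_i \to x$), so eventually $\|\Psi(s_i) - \rho(x_i)\| = \|s_i - \tau(x_i)\| < \epsilon$. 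Since $\epsilon$ was arbitrary, the criterion above yields $\Psi(s_i) \to \Psi(s)$, proving continuity of $\Psi$.

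To conclude that $\Psi$ is an equivalence, note that the isometry assumption makes each $\Psi_x$ a linear isometric isomorphism of Banach spaces, so $\Psi$ is a bijection. Continuity of $\Psi^{-1}$ then follows by applying the preceding argument with the roles of $\B^1$ and $\B^2$ interchanged, using the family $\Gamma' := \{\Psi \circ \sigma : \sigma \in \Gamma\}$: its members are continuous by~(4), their values span $B^2_x$ densely because each $\Psi_x$ is an isometric isomorphism, and $\Psi^{-1} \circ (\Psi \circ \sigma) = \sigma$ is continuous for every $\sigma \in \Gamma$. The main difficulty lies in the continuity step, where one must simultaneously control the discrepancy between $\Psi(s)$ and $\rho(x)$ on the one hand and between $\Psi(s_i)$ and $\rho(x_i)$ on the other hand, using a single section $\rho$ obtained from the density hypothesis~(3); it is precisely the transfer of norm estimates from $\B^1$ to $\B^2$ via the fiberwise isometry that makes this dual control possible.
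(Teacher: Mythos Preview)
The paper does not supply a proof of this lemma; it simply records the statement with a citation to \cite[Chap.~II, Sec.~13.16 \& 13.17]{Fell1}. Your argument is the standard one from that reference and is essentially correct: approximate by a finite linear combination of sections from $\Gamma$, push through $\Psi$ using the fiberwise isometry, and invoke the tube-neighborhood description of the Banach bundle topology.

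One point, however, is not justified. You write that ``the isometry assumption makes each $\Psi_x$ a linear isometric isomorphism of Banach spaces, so $\Psi$ is a bijection.'' A linear isometry between Banach spaces need not be surjective (the unilateral shift on $\ell^2$ is the standard counterexample), so surjectivity of $\Psi_x$ does not follow from hypothesis~(2) as literally worded. This is in fact a slight imprecision in the lemma as stated in the paper: for the conclusion that $\Psi$ is an equivalence---in particular a homeomorphism, hence bijective---to be attainable at all, one must read ``linear isometry'' in~(2) as ``surjective linear isometry'' (isometric isomorphism), which is how the result is stated in Fell--Doran and how it is used in every application within the paper. Under that reading your argument for continuity of $\Psi^{-1}$ via the family $\Gamma' = \{\Psi\circ\sigma:\sigma\in\Gamma\}$ goes through unchanged; without it the lemma is simply false, and no amount of cleverness will close the gap.
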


\subsubsection{Fell bundles}

\begin{defi}[{\cite[Chap.~VIII, Sec.~16.2]{Fell2}}]
\label{def:fb}
A \emph{Fell bundle} (\aka a C\Star algebraic bundle) is a Banach bundle $(\B,G,\pi)$, where $G$ is a locally compact group, equipped with a continuous associative multiplication $m: \B \times \B \to \B$, using the notation $st := m(s,t)$ for all $s,t \in \B$, and a continuous involution ${}^*:\B \to \B$ satisfying the following conditions:
\begin{enumerate}[label=(F\arabic*)]
\item
\label{cond:fb1}
    $B_g B_h \subseteq B_{gh}$ and the restriction $m_{g,h}:B_g \times B_h \to B_{gh}$ is bilinear for all $g,h\in G$.   
\item
\label{cond:fb2}
      $\|st\|\leq\|s\|\|t\|$ for all $s,t \in \B.$
\item
\label{cond:fb3}
    $B^*_g\subseteq B_{g^{-1}}$ and the restriction ${}^*:B_g\to B_{g^{-1}}$ is anti-linear for all $g\in G$.
\item
\label{cond:fb4}
    $(st)^*=t^*s^*$ for all $s,t \in \B$.    
\item
\label{cond:fb5} 
    $(s^*)^*=s$ for all $s \in \B$.
\item
\label{cond:fb6}
    $\|s^*s\|= \|s\|^2$ for all $s \in \B$.   
\item
\label{cond:fb7} 
    $s^*s\geq 0$ in $B_e$ for all $s \in \B$.   
\end{enumerate}
%We will typically suppress the maps $m$ and ${}^*$ and simply use $(\B,G,\pi)$ to denote a Fell bundle.
A Fell bundle $(\B,G,\pi)$ is called \emph{saturated} if $\spann\{B_g B_h\}$ is dense in $B_{gh}$ for all $g,h \in G$.
\end{defi}

\begin{rem}\label{rem:fb}
Let $(\B,G,\pi)$ be a Fell bundle.
\begin{enumerate}[label={\arabic*}.,ref=\ref{rem:fb}.{\arabic*}]
\item 
    If $\B$ is a discrete space and $G$ is a discrete group, the the topological requirements are vacuous.
    In this case, we occasionally refer to $(\B,G,\pi)$ as a \emph{discrete Fell bundle}.
\item 
    Conditions~\ref{cond:fb1}-\ref{cond:fb6} imply that $B_e$ is a C\Star algebra with respect to the restricted operations, known as the \emph{unit fiber}, and item~\ref{cond:fb7} refers to the corresponding standard order relation on $B_e$.
\item
\label{rem:fb:unital}
    If $B_e$ is unital and $(\B,G,\pi)$ is saturated, then $\spann\{B_g B_{g^{-1}}\} = B_e$ for all $g \in G$.
\item
\label{rem:fb:MSE}
    Conditions~\ref{cond:fb1} and~\ref{cond:fb3} imply that each $B_g$, $g \in G$, is almost a Morita equivalence $B_e$-bimodule with respect to the canonical left and right actions and the inner products ${}_{B_e}\langle s,t \rangle := st^*$ and $\langle s,t \rangle_{B_e} := s^*t$ for all $s,t \in B_g$.
    In fact, the only missing condition is that the linear span of the inner products need not be dense.
    Therefore, if $(\B,G,\pi)$ is saturated, then each $B_g$, $g \in G$, is a Morita equivalence $B_e$-bimodule.
\item 
    If $B_e$ is unital, then $1_{B_e}s = s = s1_{B_e}$ for all $s \in \B$.
\item
\label{rem:fb:innerprod}
    The map $\{(s,t)\in \B \times \B : \pi(s)=\pi(t)\} \ni (s,t) \mapsto {}_{B_e}\langle s,t \rangle \in B_e$ is continuous, as it is the restriction of the continuous map $m \circ (\id_\B \times {}^* \times) : \B \times \B \to \B$.
    Similarly, the map $\{(s,t)\in \B \times \B : \pi(s)=\pi(t)\} \ni (s,t) \mapsto \langle s,t \rangle_{B_e} \in B_e$ is continuous.
\item
\label{rem:fb:multi}
    Each map $m_{g,h}:B_g \times B_h \to B_{gh}$, for $g,h \in G$, in item~\ref{cond:fb1} factors through a $B$-bimodule homomorphism $m_{g,h}:B_g \otimes_{B_e} B_h \to B_{gh}$ (by a slight abuse of notation) that preserves both the left and right inner products. 
    As a consequence, if $(\B,G,\pi)$ is saturated, then $B_g \otimes_{B_e} B_h \cong B_{gh}$ as Morita equivalence $B$-bimodules for all $g,h$.
\item
\label{rem:fb:inv}
    Suppose that $(\B,G,\pi)$ is saturated.
    Let $g \in G$ and let $s \in B_g$.
    According to the previous item, there exist finitely many elements $s_i \in B_g$ and $t_i \in B_{g^{-1}}$ such that $1_{B_e} = \sum_{i} m_{g,g^{-1}}(s_i \otimes t_i$).
    Hence, $s^* = \sum_{i} \langle s,s_i \rangle_B t_i$.
\item
\label{rem:fb:multi_cont}
    Let $\Gamma$ be family of continuous sections of $\pi : \B \to G$ such that $\{\sigma(g):\sigma \in \Gamma\}$ is dense in $B_g$ for every $g \in G$.
    By~\cite[Chap.~VIII, Sec.~2.4]{Fell2}, the multiplication $m: \B \times \B \to \B$ is continuous if and only if the induced map $m \circ (\sigma,\sigma'): G \times G \to \B$ is continuous for every $(\sigma,\sigma') \in \Gamma \times \Gamma$.
\item
\label{rem:fb:inv_cont}
%    Items~\ref{cond:fb2},~\ref{cond:fb5}, and~\ref{cond:fb6} imply that the involution on $\B$ is isometric.
%    Furthermore, 
    Let $\Gamma$ be as in the previous item.
    \cite[Chap.~VIII, Sec.~3.2]{Fell2} shows that the involution ${}^*:\B \to \B$ is~continuous if and only if the induced map ${}^* \circ \sigma: G \to \B$ is continuous for every $\sigma \in \Gamma$.
\item
\label{rem:fb:sections}
    Let $\lambda$ be a left Haar measure on $G$ and let $\Delta$ be the corresponding modular function.
    The space $\Gamma_c(G,\B)$ of continuous, \emph{compactly supported sections} of $\pi : \B \to G$ forms a \Star algebra \wrt the multiplication and involution defined for $\sigma,\sigma' \in \Gamma_c(G,\B)$ respectively by
	\begin{align*}
		(\sigma \ast \sigma')(g):=\int_G \sigma(h) \sigma'(h^{-1}g) \, d\lambda(h)
		\quad
		\text{and}
		\quad
		\sigma^*(g):=\Delta (g^{-1}) \sigma(g^{-1})^*.
	\end{align*}
\end{enumerate}
\end{rem}

\begin{example}
\label{ex:trivialFB}
Let $G$ be a locally compact space, let $B$ be a C\Star algebra, and let $(\B,G,\pi)$ denote the trivial Banach bundle over $G$ with constant fiber $B$ (see Example~\ref{ex:trivialBB}).
Equipping $\B$ with the multiplication $m: \B \times \B \to \B$, defined by $m((g_1, b_1),(g_2, b_2)) := (g_1 g_2, b_1 b_2)$, and the involution ${}^: \B \to \B$, defined by $(g, b)^ := (g^{-1}, b^*)$, makes it a Fell bundle. 
Clearly, $(\B, G, \pi)$ is saturated. 
This Fell bundle is referred to as the \emph{trivial Fell bundle over $G$ with constant fiber $B$}.
\end{example}

\begin{example}
\label{ex:flb}
Let $G$ and $H$ be locally compact groups, let $\mathbb{T}$ denote the 1-torus, and let $p:H \to G$ be a locally trivial principal $\mathbb{T}$-bundle (see, \eg,~\cite{Huse94}).
Define $\B:= H \times_\mathbb{T} \mathbb{C} := (H \times \mathbb{C})/\mathbb{T}$ as the quotient space of $H \times \mathbb{C}$ under the left action of $\mathbb{T}$, defined for $t \in \mathbb{T}$, $h \in H$, and $z \in \mathbb{C}$, by $t\acts (h,z) := (h \acts z,zt)$.
This action is free and proper, and the $\mathbb{T}$-orbit of $(h,z)$ is denoted by $[h,z]$.
The map $\pi: \B \to G$, defined by $\pi[(h,z)] := p(h)$, is a line bundle over $H$.
To endow $\pi: \B \to G$ with the structure of a Fell bundle, one defines a multiplication and an involution as follows:
\begin{gather*}
    m: \B \times \B \to \B,
    \qquad
    m([h_1,z_1],[h_2,z_2]) := [h_1h_2,z_1z_2],
    \\
    {}^*: \B \to \B,
    \qquad
    [h,z]^* := [h^{-1},\bar{z}].
\end{gather*}
It is evident that $(\B,G,\pi)$ is saturated and that $B_g \cong \mathbb{C}$ for every $g \in G$.
Such Fell bundles are referred to as \emph{Fell line bundles}.
Notably, any Fell bundle with one-dimensional fibers is known to be of this type.
This example will be utilized multiple times.
\end{example}

\begin{example}
Let $G$ be a discrete group.
A C\Star algebra $A$ is said to be $G$-graded if it comes equipped with a linearly independent family $(A_g)_{g \in G}$ of closed subspaces of $A$ satisfying the following conditions:
\begin{enumerate}[label=(\arabic*)]
\item 
    $A_g A_h \subseteq A_{gh}$ for all $g,h\in G$.
\item
    $A^*_g \subseteq A_{g^{-1}}$ for all $g\in G$.
\item 
    The algebraic direct sum $\bigoplus_{g \in G}^{\text{alg}} A_g$ is dense in~$A$.
\end{enumerate}
Some observations are pertinent here:

\begin{enumerate}[label={\arabic*}.]
\item
    Every graded C\Star algebra gives rise to a discrete Fell bundle.
\item 
    Any strongly continuous action of a compact Abelian group on a unital C\Star algebra induces a grading by its dual group (see, \eg,~\cite[Thm.~4.22]{HoMo06}), thereby yielding a Fell bundle over the latter.
Moreover, this construction exhausts all Fell bundles over discrete Abelian groups (see, \eg,~\cite{Rae18}).
\item 
    Free actions of compact Abelian groups on unital C\Star algebras are in one-to-one correspondence with saturated Fell bundles over discrete Abelian groups (\cf~\cite{SchWa15}). 
\end{enumerate}
\end{example}

\begin{defi}
\label{def:equivalence_fb} 
Two saturated Fell bundles $(\B^1,\pi^1,G)$ and $(\B^2,\pi^2,G)$ with unit fiber $B$ are said to be \emph{equivalent} if there exists an equivalence $\Psi: \B^1 \to \B^2$ between the underlying Banach bundles (see Definition~\ref{def:equivalence_bb}) that is normalized and preserves their multiplicative and involutive structures, \ie,
\pagebreak[3]
\begin{enumerate}[label=(\arabic*)]
\item
	$\Psi_e = \id_B$,
\item 
    $\Psi_g(s) \Psi_h(t) = \Psi_{gh}(st)$ for all $g,h \in G$, $s \in B^1_g$, and $t \in B^1_h$, and
\item 
     $\Psi_g(s)^* = \Psi_{g^{-1}}(s^*)$ for all $g \in G$, $s \in B^1_g$.
\end{enumerate}
Such a map $\Psi$ is called an \emph{equivalence} between $(\B^1,\pi^1,G)$ and $(\B^2,\pi^2,G)$.
Notably, all of the maps  $\Psi_g$,  for $g \in G$, are necessarily isomorphisms of Morita equivalence $B$-bimodules.
\end{defi}

\subsubsection{Constructing Fell Bundles}\label{sec:construction_fb}

The process of constructing specific Fell bundles is typically carried out in the following way:
Let $\B$ be a set and let $\pi:\B \to G$ be a surjection onto a locally compact group $G$ such that each fiber $B_g$, for $g \in G$, is a Banach space.
Assume that $\B$ is equipped with an associative multiplication $m: \B \times \B \to \B$ and an involution ${}^*:\B \to \B$, satisfying all axioms of Definition~\ref{def:fb} (except the topological requirements).
In other words, when $G$ is considered as a discrete group, the triple $(\B,G,\pi)$ forms a discrete Fell bundle.
Furthermore, let~$\Gamma$ be a \emph{fundamental family of sections of $\pi:\B \to G$}, meaning a \Star algebra of compactly supported sections of $\pi:\B \to G$ (see Remark~\ref{rem:fb:sections}) such that
\begin{enumerate}[label=(\arabic*)]
\item 
    $\{\sigma(g):\sigma \in \Gamma\}$ is dense in $B_g$ for every $g \in G$, and
\item 
    $G \ni g \mapsto \Vert \sigma(g) \Vert \in \mathbb{R}$ is continuous for every $\sigma \in \Gamma$.
\end{enumerate}

\begin{lemma}[{\cite[Prop.~2.5]{Tak14}}]
\label{lem:fb_top}
Under the above hypotheses, there exists a unique topology on $\B$ that makes $\pi:\B \to G$ a Fell bundle, and all sections in $\Gamma$ continuous, \ie, $\Gamma \subseteq \Gamma_c(G,\B)$.
\end{lemma}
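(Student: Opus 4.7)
The plan is to separate the claim into three parts: place a Banach-bundle topology on $\B$ by means of Lemma~\ref{lem:bb_top}, verify continuity of the involution, and verify continuity of the multiplication. The algebraic Fell-bundle axioms~\ref{cond:fb1}--\ref{cond:fb7} hold by hypothesis on the underlying discrete structure, so nothing algebraic needs to be checked; the uniqueness clause will then follow from the uniqueness part of Lemma~\ref{lem:bb_top}.

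For the Banach-bundle topology, I would apply Lemma~\ref{lem:bb_top} to the complex linear span of $\Gamma$. Condition~(1) of a fundamental family gives fiberwise density, while condition~(2), together with the triangle inequality, provides continuity of $g \mapsto \|f(g)\|$ for every $f \in \spann(\Gamma)$. This yields a unique topology on $\B$ under which $(\B, G, \pi)$ is a Banach bundle with $\Gamma \subseteq \Gamma(G, \B)$, and the compact-support hypothesis built into the definition of a fundamental family upgrades this to $\Gamma \subseteq \Gamma_c(G, \B)$.

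Continuity of the involution is the easier of the two remaining steps. By Remark~\ref{rem:fb:inv_cont}, it suffices to show that ${}^* \circ \sigma: G \to \B$ is continuous for each $\sigma \in \Gamma$. Since $\Gamma$ is a \Star algebra under the convolution--involution structure recorded in Remark~\ref{rem:fb:sections}, the section $\sigma^*$ again lies in $\Gamma$, and substituting $g \leftrightarrow g^{-1}$ in the defining identity $\sigma^*(g) = \Delta(g^{-1}) \sigma(g^{-1})^*$ yields $\sigma(g)^* = \Delta(g^{-1}) \sigma^*(g^{-1})$. The right-hand side is continuous in $g$ as a composition of the inversion map of $G$, the continuous section $\sigma^*$, the continuous modular function $\Delta$, and scalar multiplication in the Banach bundle.

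The main obstacle is the continuity of the multiplication. Here I would apply Remark~\ref{rem:fb:multi_cont} to reduce the task to showing that $(g,h) \mapsto \sigma(g)\sigma'(h)$ is continuous $G \times G \to \B$ for every pair $\sigma, \sigma' \in \Gamma$. Fix $(g_0, h_0)$ and set $s_0 := \sigma(g_0)\sigma'(h_0) \in B_{g_0 h_0}$. By fiberwise density of $\spann(\Gamma)$ in $B_{g_0 h_0}$, for each $\epsilon > 0$ I can select $\eta_\epsilon \in \spann(\Gamma)$ with $\|\eta_\epsilon(g_0 h_0) - s_0\| < \epsilon$; the map $(g,h) \mapsto \eta_\epsilon(gh)$ is then a continuous local approximant to $(g,h) \mapsto \sigma(g)\sigma'(h)$, since the group multiplication and $\eta_\epsilon$ are both continuous. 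Combining the submultiplicativity~\ref{cond:fb2} with the norm-continuity of $\sigma, \sigma', \eta_\epsilon$, one should obtain a uniform estimate on $\|\sigma(g)\sigma'(h) - \eta_\epsilon(gh)\|$ throughout a shrinking neighborhood of $(g_0, h_0)$, which, together with the Banach-bundle convergence axiom (B4), yields convergence of $\sigma(g)\sigma'(h)$ to $s_0$ in $\B$. The delicate point is precisely this uniform norm bound, which constitutes the core technical input of Takahashi's argument and is where the richness of $\Gamma$ as a \Star algebra of compactly supported sections is essential.
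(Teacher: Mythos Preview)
The paper does not supply its own proof of this lemma; it cites \cite[Prop.~2.5]{Tak14} and, in the remark immediately following, gives only the two-line sketch that Lemma~\ref{lem:bb_top} yields the Banach-bundle topology while the \Star algebra structure of $\Gamma$ furnishes continuity of multiplication and involution. Your outline follows exactly that sketch, and your treatment of the involution via $\sigma(g)^* = \Delta(g^{-1})\,\sigma^*(g^{-1})$ is clean and correct. (One small redundancy: $\Gamma$ is already a complex linear space, being a \Star algebra, so passing to $\spann(\Gamma)$ is unnecessary.)

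The multiplication step, however, contains a genuine gap. You propose to bound $\|\sigma(g)\sigma'(h) - \eta_\epsilon(gh)\|$ uniformly near $(g_0,h_0)$ using ``submultiplicativity~\ref{cond:fb2} with the norm-continuity of $\sigma,\sigma',\eta_\epsilon$'', but these ingredients do not yield such a bound: submultiplicativity controls $\|\sigma(g)\sigma'(h)\|$, not the difference, and knowing only that the three norms vary continuously says nothing about how the \emph{difference} moves inside $B_{gh}$. Attempting to control it via the C\Star identity $\|x\|^2 = \|x^*x\|$ and expanding in $B_e$ is circular, since continuity of terms like $g \mapsto \sigma(g)^*\sigma(g) \in B_e$ is precisely what is at stake. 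What actually makes the argument go through is the \emph{convolution} product on $\Gamma$: for $f \in C_c(G)$ concentrated near $g_0$, the section $(f\sigma)\ast\sigma'$ lies in $\Gamma$ (hence is continuous) and its value at $k$ is $\int_G f(h)\,\sigma(h)\sigma'(h^{-1}k)\,d\lambda(h)$, an average of the pointwise products you need. Letting $f$ run through an approximate identity at $g_0$ recovers $\sigma(g_0)\sigma'(g_0^{-1}k)$ as a limit of values of continuous sections, uniformly on compacta; this is the mechanism by which the \Star algebra hypothesis enters, and it is the missing idea in your sketch.
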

%\begin{proof}
%    
%\end{proof}

\begin{rem}
Since $\Gamma$ is, in particular, a complex linear space, there exists a unique topology on $\B$ that turns $\pi:\B \to G$ into a Banach bundle (see Lemma~\ref{lem:bb_top}). 
The condition that $\Gamma$ is a \Star algebra of compactly supported sections of $\pi:\B \to G$ is then employed to establish the continuity of the multiplication and involution with respect to this topology.
\end{rem}

\begin{rem}
\label{rem:fb_top}
Let $(\B,G,\pi)$ be a Fell bundle.
Then $\Gamma_c(G,\B)$ is a fundamental family of sections of $(\B,G,\pi)$ (see Remark~\ref{rem:fb:sections}).
Moreover, one can recover a base for the topology of $\B$ via $\Gamma_c(G,\B)$.
\end{rem}

\subsection{Automorphisms of Morita equivalence bimodules}\label{sec:autME}

Let $B$ be a unital C\Star algebra and let $M$ be a Morita equivalence $B$-bimodule.
An automorphism of $M$ is a $B$-bimodule isomorphism from $M$ to itself that preserves both the left and right inner products.
We let $\Aut(M)$ stand for the group of automorphisms of $M$.
In this brief section, we present two insightful statements on automorphisms of $M$, which, although likely familiar to experts, seem to lack explicit discussion in the literature.

\begin{lemma}[{see, \eg,~\cite[Prop.~5.4 \& Cor.~5.5]{SchWa15}}]
\label{lem:autME}
Let $B$ be a unital C\Star algebra and let $M$ be a Morita equivalence $B$-bimodule.
Then the following assertions hold:
\begin{enumerate}[label={(\roman*)},ref=\ref{lem:autME}.{(\roman*)}]
\item\label{autME}
    The map $\varphi_M: UZ(B) \to \Aut(M)$, defined by $\varphi_M(u)(m) := um$, is a group isomorphism.
\item\label{isoME}
    For $u \in UZ(B)$ consider the map $\Psi_u \in \Aut(M)$ defined by $\Psi_u(m) := mu$ for all $m \in M$.
    Then the map $\psi_M: UZ(B)\to UZ(B)$, defined by $\psi_M(u) := \varphi_M^{-1}(\Psi_u)$, is a group automorphism.
\end{enumerate}
\end{lemma}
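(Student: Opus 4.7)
The plan is to derive part (i) from the standard identification of adjointable right $B$-linear operators on a Morita equivalence $B$-bimodule with left multiplications by elements of $B$, and then obtain part (ii) by reapplying part (i) to the family of maps $\Psi_u$.

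For (i), I would first confirm that each $\varphi_M(u)$ lies in $\Aut(M)$. Centrality of $u$ gives both left and right $B$-linearity, since $\varphi_M(u)(bm) = ubm = bum = b\varphi_M(u)(m)$ and $\varphi_M(u)(mb) = umb = (um)b$, while centrality together with unitarity yields preservation of both inner products via ${}_B\langle um, um' \rangle = u\,{}_B\langle m, m' \rangle u^* = {}_B\langle m, m' \rangle$ and $\langle um, um' \rangle_B = \langle m, u^*um' \rangle_B = \langle m, m' \rangle_B$. The map $\varphi_M$ is then visibly a group homomorphism, and injective because the left $B$-action on $M$ is faithful.

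The hard part will be the surjectivity of $\varphi_M$. To address it, I would regard $M$ as a full right Hilbert $B$-module and invoke the fact that, under the unitality assumption on $B$, every adjointable right $B$-linear operator on $M$ is left multiplication by a unique element of $B$. Any $T \in \Aut(M)$ is a right $B$-linear isometric bijection, hence a unitary adjointable operator, which produces a unique unitary $u \in B$ with $T = \varphi_M(u)$. The additional left $B$-linearity of $T$ translates into $(ub - bu)m = 0$ for all $b \in B$ and $m \in M$, and faithfulness of the left action then forces $u \in Z(B)$, so that $u \in UZ(B)$.

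For (ii), I would first check that each $\Psi_u$ lies in $\Aut(M)$. Right $B$-linearity follows from centrality via $\Psi_u(mb) = mbu = mub = \Psi_u(m) b$; preservation of the right inner product is immediate from $\langle mu, m'u \rangle_B = u^* \langle m, m' \rangle_B u = \langle m, m' \rangle_B$; and preservation of the left inner product drops out by contracting with an arbitrary $m'' \in M$ and exploiting the Morita compatibility ${}_B\langle m, m' \rangle m'' = m \langle m', m'' \rangle_B$ together with faithfulness of the left action. Since $UZ(B)$ is abelian, the assignment $u \mapsto \Psi_u$ defines a group homomorphism $UZ(B) \to \Aut(M)$, and running the surjectivity argument of part (i) with the roles of the left and right structures swapped shows it is in fact a group isomorphism. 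Consequently $\psi_M = \varphi_M^{-1} \circ (u \mapsto \Psi_u)$ is a composition of group isomorphisms, hence itself a group automorphism of $UZ(B)$.
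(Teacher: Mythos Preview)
The paper does not give its own proof of this lemma; it is stated with a citation to \cite[Prop.~5.4 \& Cor.~5.5]{SchWa15} and then used as a black box. Your argument is correct and is essentially the standard one underlying that reference: the decisive input is the identification of the adjointable right $B$-module endomorphisms of $M$ with $B$ acting on the left (valid because $M$ is full on the left and $B$ is unital, so $\mathcal{K}_B(M)=\mathcal{L}_B(M)\cong B$), from which surjectivity of $\varphi_M$ follows, and the mirror-image argument handles $u\mapsto\Psi_u$. One small omission worth recording explicitly in part~(ii): left $B$-linearity of $\Psi_u$, i.e.\ $\Psi_u(bm)=bmu=b\,\Psi_u(m)$, should be stated alongside the other checks before concluding $\Psi_u\in\Aut(M)$.
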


\subsection{The Picard group}\label{sec:pic}

Let $B$ be a unital C\Star algebra. 
Morita equivalence $B$-bimodules $M_1$ and $M_2$ are called equivalent if there exists Morita equivalence $B$-bimodule isomorphism $\Psi:M_1 \to M_2$, \ie, a $B$-bimodule isomorphism that preserves both the left and right inner products.
The equivalence class of a Morita equivalence $B$-bimodule $M$ is written as $[M]$.

The set of equivalence classes of Morita equivalence $B$-bimodules forms an Abelian group with respect to the internal tensor product.
%and the neutral element given by the equivalence class of $B$.
This group, denoted by $\Pic(B)$, is referred to as the \emph{Picard group} of $B$ (see, \eg,~\cite{BrGrRi77}).

The group $\Out(B) := \Aut(B)/\Inn(B)$, commonly called the outer automorphism group of~$B$, is a subgroup of $\Pic(B)$.
Indeed, for $\alpha \in \Aut(B)$ let $M_\alpha$ denote the vector space $B$ endowed with the canonical left Hilbert $B$-module structure and the right Hilbert $B$-module structure defined by $s \acts b := s \alpha(b)$ and $\langle s, t \rangle_B := \alpha^{-1}(s^*t)$ for all $s,t \in M_\alpha$ and $b \in B$.
It is straightforwardly checked that $M_\alpha$ is a Morita equivalence $B$-bimodule.
Moreover, for $\alpha,\beta \in \Aut(B)$ we have $M_\alpha \otimes_{B} M_\beta \cong M_{\alpha \circ \beta}$.
We thus get a group homomorphism from $\Aut(B)$ to $\Pic(B)$.
As $M_\alpha$ is equivalent to $B$ if and only if $\alpha$ is inner, it follows that $\Out(B)$ is a subgroup of $\Pic(B)$ as claimed.
Noteworthily, if $B$ is separable and stable, then the injection $\Out(B) \to \Pic(B)$ in fact turns out to be an isomorphism (see~\cite[Sec.~3]{BrGrRi77}).  

%\begin{example}
%\label{expl:pic_grp}
%\begin{enumerate}[label={\arabic*}]
%\item	
%For a finite-dimensional C\Star algebra $B$, $\Pic(B)$ is isomorphic to the group of permutations of the spectrum of $B$ (see \cite[Sec.~3]{BrGrRi77}).
%\item\label{en:pic_grp:C(X)}
%	For a compact space $X$ the Picard group $\Pic(C(X))$ is isomorphic to the semi-direct product $\Pic(X)\rtimes\Homeo(X)$, where $\Pic(X)$ denotes the set of equivalence classes of complex line bundles over $X$, and $\Homeo(X)$ stands for the group of homeomorphisms of $X$ (see \cite{BrGrRi77}).
%\item
%    Let $\theta$ be irrational and let $\mathbb{T}^2_{\theta}$ the corresponding quantum 2-torus. 
%    Then $\Pic(\mathbb{T}^2_{\theta})$ is isomorphic to $\Out(\mathbb{T}^2_{\theta})$ in case $\theta$ is quadratic and to $\Out(\mathbb{T}^2_{\theta})\rtimes\mathbb{Z}$ otherwise (see~\cite{Ko97}).
%	\end{enumerate}
%\end{example}

The following statement is central to our work and provides an invariant for saturated Fell bundles, expressed in terms of their underlying group and unit fiber:

\begin{lemma}
\label{lem:pic}
If $(\B,G,\pi)$ is a Fell bundle, then the map $\psi_\B:G \to \Pic(B_e)$, defined by $\psi_\B(g):=[B_g]$, is a well-defined group homomorphism.
\end{lemma}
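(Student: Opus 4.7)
The plan is to verify the three defining properties in order: (a) the map is well-defined, \ie, each $B_g$ represents a genuine element of $\Pic(B_e)$; (b) $\psi_\B(e)$ equals the identity of $\Pic(B_e)$; and (c) $\psi_\B(gh) = \psi_\B(g) \cdot \psi_\B(h)$ for all $g,h \in G$. I would begin by noting that, since the Picard group is built from equivalence classes of Morita equivalence bimodules, this statement implicitly requires $(\B,G,\pi)$ to be saturated. Under saturation, Remark~\ref{rem:fb:MSE} furnishes exactly what is needed for (a): each fiber $B_g$ is a Morita equivalence $B_e$-bimodule \wrt the canonical left and right actions and the inner products ${}_{B_e}\langle s,t\rangle = st^*$ and $\langle s,t\rangle_{B_e} = s^*t$. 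Property (b) is immediate, since $B_e$, viewed as a bimodule over itself, is the neutral element of $\Pic(B_e)$.

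The substantive step is (c), for which I would invoke Remark~\ref{rem:fb:multi}. The Fell bundle multiplication $m_{g,h}: B_g \times B_h \to B_{gh}$ is bilinear by~\ref{cond:fb1} and $B_e$-balanced by associativity of $m$, so it factors through a $B_e$-bimodule homomorphism $\widetilde{m}_{g,h}: B_g \otimes_{B_e} B_h \to B_{gh}$. This factor map preserves both inner products: for $s_1,s_2 \in B_g$ and $t_1,t_2 \in B_h$, one has
\[
    \langle s_1 \otimes t_1,\, s_2 \otimes t_2\rangle_{B_e}
    = \langle t_1,\, \langle s_1, s_2\rangle_{B_e}\acts t_2\rangle_{B_e}
    = t_1^* s_1^* s_2 t_2
    = (s_1 t_1)^*(s_2 t_2)
    = \langle s_1 t_1,\, s_2 t_2\rangle_{B_e},
\]
and a symmetric computation using ${}_{B_e}\langle\cdot,\cdot\rangle$ handles the left inner product. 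Saturation then guarantees that $\widetilde{m}_{g,h}$ has dense image in the Morita bimodule $B_{gh}$, and since an inner-product-preserving bimodule map is automatically isometric and has closed image, $\widetilde{m}_{g,h}$ is an isomorphism of Morita equivalence $B_e$-bimodules. Passing to equivalence classes yields $\psi_\B(g)\cdot \psi_\B(h) = [B_g \otimes_{B_e} B_h] = [B_{gh}] = \psi_\B(gh)$.

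I would not expect any substantial obstacle; the proof is essentially an application of Remarks~\ref{rem:fb:MSE} and~\ref{rem:fb:multi}. The only point that warrants a careful line of verification is the compatibility of the multiplication with the Morita inner products, as sketched above, and the observation that an inner-product-preserving bimodule map with dense image must be an isomorphism (since it is automatically an isometry for the canonical Hilbert module norms, hence has closed range).
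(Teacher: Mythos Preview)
Your proposal is correct and follows essentially the same route as the paper: well-definedness via Remark~\ref{rem:fb:MSE} and the homomorphism property via Remark~\ref{rem:fb:multi}, both under the (implicit) saturation hypothesis you correctly flag. Your version simply unpacks those remarks in more detail, in particular verifying the inner-product compatibility and the isometry-with-closed-range argument that the paper leaves to the reader.
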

\begin{proof}
By Remark~\ref{rem:fb:MSE}, $\psi_\B$ is well-defined.
To show that $\psi_\B$ is a group homomorphism, let $g,h \in G$.
Since $(\B,G,\pi)$ is saturated, $B_g \otimes_{B_e} B_h \cong B_{gh}$ as Morita equivalence $B$-bimodules (see Remark~\ref{rem:fb:multi}).
Therefore $\psi_\B(gh)=[B_{gh}]=[B_g \otimes_{B_e} B_h]=[B_g][B_h]=\psi_\B(g)\psi_\B(h)$.
\end{proof}

\begin{example}
\label{ex:pic_flb}
By~\cite[Sec.~3]{BrGrRi77}, $\Pic(\mathbb{C})$ is trivial. Therefore, if $(\B,G,\pi)$ is a Fell line bundle (see Example~\ref{ex:flb}), then the induced group homomorphism $\psi_\B: G \to \Pic(\mathbb{C})$ is also trivial.
\end{example}

The group homomorphism $\psi_\B:G \to \Pic(B_e)$ in Lemma~\ref{lem:pic}, henceforth referred to as the \emph{Picard homomorphism associated with $(\B,G,\pi)$}, does not determine the Fell bundle structure of $(\B,G,\pi)$ up to equivalence.
%Specifically, $\psi_\B$ defines only the linear structure, without fixing the multiplication in $\B$. 
The main objective of Section~\ref{sec:construction} is to identify the additional data required to fully determine this structure.

\subsection{Topological group cohomology}\label{sec:topcoho}

Let $G$ be a topological group and let $A$ be an Abelian topological group $A$ equipped with a continuous $G$-action $G \times A \to A$ defined by a group homomorphism $S: G \to \Aut(A)$.
For a positive integer $p$ let $C^p_c(G,A)$ denote the set of continuous $p$-cochains $G^p \to A$ and let $d_S:C^p_c(G,A) \to C^{p+1}_c(G,A)$ be the ordinary group differential given by 
\begin{align*}
    (d_Sf)(g_0,\ldots,g_p)
    &:=
    S(g_0)(f(g_1,\ldots,g_p))
    \\
    &+\sum^{p}_{j=1}(-1)^jf(g_0,\ldots ,g_{j-1}g_j,\ldots,g_p)+(-1)^{p+1}f(g_0,\ldots,g_{p-1}).
\end{align*}
It is readily seen that $d_S(C^p_c(G,A))\subseteq C^{p+1}_c(G,A)$.
A sub-complex of the standard group cohomology complex $(C^{\bullet}_p(G,A),d_S)$ is thus obtained.
Since $d^2_S=0$, the space $Z^p_c(G,A)_S := \ker({d_S}_{\vert C^p_c(G,A)})$ of \emph{continuous $p$-cocycles} contains the space $B^p_c(G,A)_S := d_S(C^{p-1}_c(G,A))$ of \emph{continuous $p$-coboundaries}.
The quotient $H^p_c(G,A)_S := Z^p_c(G,A)_S / B^p_c(G,A)_S$ is the $p^\text{th}$ \emph{(global) continuous cohomology group of $G$ with values in the $G$-module $A$}.

\section{Constructing Fell bundles via factor systems}
\label{sec:construction}

Let $G$ be a locally compact group, let $B$ be a unital C\Star algebra, and let $\psi:G \to \Pic (B)$ be a group homomorphism. 
In this section, we propose a framework for demonstrating the existence of a saturated Fell bundle
$(\B,G,\pi)$ with unit fiber $B$ and Picard homomorphism $\psi$ (see~Lemma~\ref{lem:pic}).

For a start, we choose a representative $B_g$ of $\psi(g)$ for every $g \in G$, especially $B_e:=B$, and consider the disjoint union $\B := \cup_{g\in G}B_g$ together with the canonical projection $\pi:\B \to G$.
%We refer to the triple $(\B,G,\pi)$ as a \emph{realization of $\psi$}.

To establish $(\B,G,\pi)$ as a Fell bundle, we aim to apply Lemma~\ref{lem:fb_top}, which requires, in particular, the existence of an associative multiplication $m: \B \times \B \to \B$ and an involution ${}^*:\B \to \B$ satisfying Conditions~\ref{cond:fb1}-\ref{cond:fb7}.
Moving forward, we focus on constructing such maps, a purely algebraic task, and therefore treat $G$ as a discrete group.

As $\psi$ is a group homomorphism, we have Morita equivalence $B$-bimodule isomorphisms $\Psi_{g,h}: B_g \otimes_B B_h\to B_{gh}$ for all $g,h \in G$.
In general, $\psi$ does not impose any specific relations among the family of maps $(\Psi_{g,h})_{g,h \in G}$.
However, when derived from a saturated Fell bundle $(\B,G,\pi)$ with unit fiber $B$, we can set $\Psi_{g,h} := m_{g,h}$ for all $g,h \in G$ (see Remark~\ref{rem:fb:multi}).
In this case, the associativity of $m: \B \times \B \to \B$ implies that 
\begin{align}
\label{eq:assoc}
    \Psi_{gh,k} \circ (\Psi_{g,h} \otimes \id_k) = \Psi_{g,hk} \circ (\id_g \otimes \Psi_{h,k})
    &&
    \forall g,h,k \in G.
\end{align}
This motivates the introduction of the following notion:

\begin{defi}\label{def:fs}
Let $G$ be a discrete group, let $B$ be a unital C\Star algebra, and let $\psi:G \to \Pic (B)$ be a group homomorphism.
A \emph{factor system for $\psi$} is a family $(B_g,\Psi_{g,h})_{g,h \in G}$, where
\begin{itemize}
\item 
    $B_g$, for $g \in G$, is a representative of $\psi(g)$, with $B_e:=B$, and
\item 
    $\Psi_{g,h}: B_g \otimes_B B_h\to B_{gh}$, for $g,h \in G$, is a Morita equivalence $B$-bimodule isomorphism, with $\Psi_{e,e}:=\id_B$,
\end{itemize}
that satisfies the \emph{associativity condition} in~\eqref{eq:assoc}.
\end{defi}

\begin{rem}
\label{rem:fs}
Let $G$ be a discrete group and let $B$ be a unital C\Star algebra. 
\begin{enumerate}[label={\arabic*}.,ref=\ref{rem:fs}.{\arabic*}]
\item
\label{rem:fs:induced}
    As indicated prior to Definition~\ref{def:fs}, each saturated Fell bundle $(\B,G,\pi)$ allows for a factor system of the form $(B_g,m_{g,h})_{g,h \in G}$ for $\psi_\B$ (see~Lemma~\ref{lem:pic}).
\item 
    Let $\psi:G \to \Pic (B)$ be a group homomorphism.
    The existence of a factor system for $\psi$ imposes a non-trivial cohomological condition, which we will characterize in Section~\ref{sec:non-emptiness}, along with presenting cohomological methods for the construction of factor systems, even in the absence of a saturated Fell bundle.
\item 
    Let $\psi:G \to \Pic (B)$ be a group homomorphism and let $(B_g,\Psi_{g,h})_{g,h \in G}$ be a factor system for $\psi$.
    For each $g \in G$, the maps $\Psi_{e,g}$ and $\Psi_{g,e}$ correspond to the left and right actions of $B$ on $B_g$, respectively.
%    Furthermore, for each $g \in G$, we have $\Psi_{g,g^{-1}} \otimes \id_g = \id_g \otimes \Psi_{g^{-1},g}$.
%    up to the canonical isomorphisms $B_g \otimes_B B \cong B_g \cong B \otimes_B B_g$.
\end{enumerate}
\end{rem}

We now show that each factor system, as defined in Definition~\ref{def:fs}, gives rise to a discrete saturated Fell bundle with the specified unit fiber C\Star algebra and Picard homomorphism.
To this end, let $G$ be a discrete group, let $B$ be a unital C\Star algebra, let $\psi:G \to \Pic (B)$ be a group homomorphism, and let $(B_g,\Psi_{g,h})_{g,h \in G}$ be a factor system for $\psi$.
Define $\B := \cup_{g \in G} B_g$ and let $\pi:\B \to G$ denote the canonical projection.

As the first step, we endow $\B$ with the multiplication $m: \B \times \B \to \B$, defined for $s \in B_g$, for $g \in G$, and $t \in B_h$, for $h \in G$, by 
\begin{align}
\label{eq:multi}
    m(s,t) := \Psi_{g,h}(s \otimes t).
\end{align}
It is immediate that $m$ is associative and satisfies Conditions~\ref{cond:fb1} and~\ref{cond:fb2}.
Furthermore, $\spann\{B_g B_h\}$ is dense in $B_{gh}$ for all $g,h \in G$, because we are dealing with isomorphisms.

Next, we equip $(\B,G,\pi)$ with an involution.
To achieve this, let $g \in G$ and let $s \in B_g$, and consider 
the auxiliary map $\lambda_s: B_{g^{-1}} \to  B$, defined by $\lambda_s(t) := \Psi_{g,g^{-1}}(s \otimes t)$.
This map is adjointable, as it results from composing the left creation operator $\theta_s: B_{g^{-1}} \to B_g \otimes B_{g^{-1}}$, defined by $\theta_s(t)= s\otimes t$, with the map $\Psi_{g,g^{-1}}$.
Specifically, for $b \in \B$, the adjoint takes the explicit form $\lambda_s^*(b)= \sum_{i} \langle s,s_i \rangle_B t_i$, where $\Psi_{g,g^{-1}}^*(b) = \sum_{i} s_i \otimes t_i$.
With this setup, we define the involution on $(\B,G,\pi)$ as follows (\cf~Remark~\ref{rem:fb:inv}):
\begin{align}
\label{eq:inv}
   s^* := \lambda_s^*(1_B)
\end{align}
We see at once that the involution is well-defined and satisfies Condition~\ref{cond:fb3}.
Note that the involution, when restricted to the unit fiber, coincides with the involution on $B$, as is easy to check.
%In fact, writing $^\star$ for the involution of $B$, for $b \in B$, we have $\lambda_b(b')=bb'$ for all $b' \in B$, hence $\lambda^*_b(b')=b^{\star}b'$ for all $b' \in B$, and finally $x^*=\lambda^*_x(1_B)=x^{\star}$.
To verify Condition~\ref{cond:fb5}, we assume that $\Psi^*_{g^{-1},g}(1_B)=\sum_i s_i\otimes t_i$.
Then
\begingroup
\allowdisplaybreaks
\begin{align*}
    (s^*)^*&
    =\lambda^*_{s^*}(1_B)
    =\sum_i \langle s^*,s_i\rangle_B t_i
    =\sum_i \langle 1_B,\lambda_s(s_i)\rangle_B t_i
    =\sum_i \lambda_s(s_i) t_i 
    \\
    &=\sum_i\Psi_{g,g^{-1}}(s \otimes s_i) t_i 
    =\sum_i \Psi_{e,g}(\Psi_{g,g^{-1}}(s \otimes s_i)\otimes t_i) 
    \\
    &=\sum_i \Psi_{e,g} \circ (\Psi_{g,g^{-1}} \otimes \id_g) (s \otimes s_i\otimes t_i) 
    \\
    \overset{\eqref{eq:assoc}}&{=} \sum_i \Psi_{g,e} \circ (\id_g \otimes\Psi_{g^{-1},g}) (s \otimes s_i\otimes t_i) 
    \\
    &=\sum_i \Psi_{g,e}(s \otimes \Psi_{g^{-1},g} (s_i\otimes t_i))
    = \Psi_{g,e}(s \otimes 1_B)= s,
\end{align*}
\endgroup
which is the desired conclusion.
Furthermore, we have
\begin{align*}
    s^*s
    &= \langle 1_B,s^*s \rangle_B
%   = \langle \Psi^*_{g^{-1},g}(1_B),s^* \otimes s \rangle_B
%    = \langle \Psi^*_{g^{-1},g}(1_B),\theta_{s^*}(s) \rangle_B
%    =  \langle (\Psi_{g^{-1},g} \circ \theta_{s^*})^*(1_B),s \rangle_B
    = \langle \lambda_{s^*}^*(1_B),s \rangle_B
    =  \langle (s^*)^*,s \rangle_B
    =  \langle s,s \rangle_B \geq 0,
\end{align*}
and thus $\Vert s^*s \Vert = \Vert \langle s,s \rangle_B \Vert = \Vert s \Vert^2$, with the last equality following from the fact that $B_g$ is a Morita equivalence $B$-bimodule.
This demonstrates that Conditions~\ref{cond:fb6} and~\ref{cond:fb7} hold.
We will prove Condition~\ref{cond:fb4} by means of the following technical lemma:

\begin{lemma}
\label{lem:cond:fb4}
Let $g,h\in G$, let $s\in B_g$, and let $t\in B_h$.
Then the map $\Omega_{st} := \lambda_{st} \circ \Psi_{h^{-1},g^{-1}}$ with domain $B_{h^{-1}} \otimes_B B_{g^{-1}}$ and range $B$ satisfies the following conditions:
\begin{enumerate}[label=(\roman*),ref=\ref{lem:cond:fb4}.{(\roman*)}]
\item\label{eq:omega1}  
    $\Omega_{st}(u \otimes v) = \lambda_s(\lambda_t(u)v)$ for all $u\in B_{h^{-1}}$ and $v\in B_{g^{-1}}$.
\item\label{eq:omega2} 
    $\Omega_{st}^*(b) = t^* \otimes \lambda^*_s(b)$ for all $b\in B$.
\end{enumerate}
\end{lemma}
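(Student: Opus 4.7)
My plan is to derive (i) as two successive applications of the associativity condition~\eqref{eq:assoc}, and then to obtain (ii) by verifying the defining adjoint identity against elementary tensors, reducing everything to a short sub-lemma that identifies $\lambda_t(u)^*$ with the right $B$-inner product $\langle u, t^* \rangle_B$.

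For (i), I would first unfold
\[
\Omega_{st}(u \otimes v) = \Psi_{gh, h^{-1}g^{-1}}\bigl(\Psi_{g,h}(s \otimes t) \otimes \Psi_{h^{-1},g^{-1}}(u \otimes v)\bigr).
\]
A first application of~\eqref{eq:assoc} with the triple $(g, h, h^{-1}g^{-1})$ rewrites this as $\Psi_{g, g^{-1}}\bigl(s \otimes \Psi_{h, h^{-1}g^{-1}}(t \otimes \Psi_{h^{-1}, g^{-1}}(u \otimes v))\bigr)$. A second application of~\eqref{eq:assoc}, now to the inner piece with triple $(h, h^{-1}, g^{-1})$, collapses it to $\Psi_{e, g^{-1}}(\Psi_{h, h^{-1}}(t \otimes u) \otimes v)$. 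Since, by Remark~\ref{rem:fs}, $\Psi_{e, g^{-1}}$ implements the left $B$-action on $B_{g^{-1}}$, the expression equals $\lambda_t(u) \cdot v$, and the full computation finishes with $\Psi_{g, g^{-1}}(s \otimes \lambda_t(u) v) = \lambda_s(\lambda_t(u) v)$.

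For (ii), I would characterise $\Omega_{st}^*(b)$ by establishing
\[
\langle \Omega_{st}(u \otimes v), b \rangle_B = \langle u \otimes v, t^* \otimes \lambda_s^*(b) \rangle_B
\]
for all $u \in B_{h^{-1}}$, $v \in B_{g^{-1}}$, and $b \in B$. Using (i), the adjointability of $\lambda_s$, and the standard fact that the left $B$-action on the Morita equivalence bimodule $B_{g^{-1}}$ is adjointable with adjoint given by the left action of the $C^*$-conjugate, the left hand side rewrites as $\langle v, \lambda_t(u)^* \cdot \lambda_s^*(b) \rangle_B$. The internal tensor product formula $\langle x_1 \otimes y_1, x_2 \otimes y_2 \rangle_B = \langle y_1, \langle x_1, x_2 \rangle_B \cdot y_2 \rangle_B$ puts the right hand side in the form $\langle v, \langle u, t^* \rangle_B \cdot \lambda_s^*(b) \rangle_B$. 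The identity therefore reduces to the sub-lemma $\lambda_t(u)^* = \langle u, t^* \rangle_B$ in $B$, which in turn follows from $\lambda_t(u)^* = \langle \lambda_t(u), 1_B \rangle_B$, the adjointability of $\lambda_t$, and the defining equation $t^* := \lambda_t^*(1_B)$.

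The main pitfall to avoid is circularity: at this stage of the text Condition~\ref{cond:fb4} has not yet been established, so one must not use $(st)^* = t^* s^*$ anywhere. Every $*$ appearing in the argument has to be traced back either to the $C^*$-involution on the unit fibre $B$ or to the defining formula $\lambda_s^*(1_B)$; it is precisely for this reason that the sub-lemma above is phrased via an inner product rather than via a product of involutions. Once this discipline is maintained, the proof is a formal consequence of the associativity condition~\eqref{eq:assoc} together with the standard Hilbert-module adjoint identities.
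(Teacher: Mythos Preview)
Your proposal is correct and follows essentially the same route as the paper's own proof: two applications of the associativity condition~\eqref{eq:assoc} for part~(i), and for part~(ii) the same chain $\lambda_t(u)^* = \langle \lambda_t(u),1_B\rangle_B = \langle u,\lambda_t^*(1_B)\rangle_B = \langle u,t^*\rangle_B$ combined with the internal tensor product inner-product formula. Your explicit remark about avoiding circularity with Condition~\ref{cond:fb4} is apt and matches the paper's intent.
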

\begin{proof}
\begin{enumerate}[label={(\roman*)}]
\item
    Let $u\in B_{h^{-1}}$ and let $v\in B_{g^{-1}}$.
    Then
    \begin{align*}
        \Omega_{st}(u \otimes v)
   %    =\Psi_{k,k^{-1}}(xy \otimes \Psi_{h^{-1},g^{-1}}(u\otimes v))
   %     = \Psi_{gh,(gh)^{-1}}(\Psi_{g,h}(s \otimes t)\otimes \Psi_{h^{-1},g^{-1}}(u\otimes v))
        &= 
        \Psi_{gh,h^{-1}g^{-1}}\circ (\Psi_{g,h}\otimes \id_{h^{-1}g^{-1}})(s \otimes t \otimes \Psi_{h^{-1},g^{-1}}(u \otimes v))
        \\
        \overset{\eqref{eq:assoc}}&{=} \Psi_{g,g^{-1}}(s \otimes \Psi_{e,g}(t \otimes \Psi_{h^{-1},g^{-1}}(u \otimes v)))
        \\
        \overset{\eqref{eq:assoc}}&{=} \Psi_{g,g^{-1}}(s \otimes \Psi_{e,g^{-1}}(\Psi_{h,h^{-1}}(t\otimes u) \otimes v))
%        \\
%        &= 
%        \Psi_{g,g^{-1}}(s \otimes \Psi_{e,g^{-1}}(\lambda_t(u) \otimes v))
        = 
        \Psi_{g,g^{-1}}(s \otimes \lambda_t(u)v) = \lambda_s(\lambda_t(u)v).
   \end{align*}
\item
    Let $b\in B$.
    Furthermore, let $u\in B_{h^{-1}}$ and let $v\in B_{g^{-1}}$.
    Then
,    \begin{align*}
        \langle \Omega_{st}(u\otimes v), b \rangle_B
        &=\langle \lambda_t(u)v, \lambda_s^*(b) \rangle_B
        \\
        &=\langle v, (\lambda_t(u))^*\lambda_s^*(b) \rangle_B
        \\
        &=\langle v, \langle \lambda_t(u),1_B \rangle_B \lambda_s^*(b)\rangle_B
        \\
        &=\langle v, \langle u,\lambda^*_t(1_B)\rangle_B \lambda_s^*(b) \rangle_B
        \\
        &=\langle v, \langle u,t^*\rangle_B \lambda_s^*(b) \rangle_B
        \\
        &=\langle u \otimes v, t^* \otimes \lambda_s^*(b) \rangle_B.
    \end{align*}
    It follows that $\Omega_{st}^*(b)=t^*\otimes \lambda^*_s(b)$, as claimed.
\end{enumerate}
\end{proof}

Let $g,h \in G$, let $s \in B_g$, and let $t \in B_h$.
By Lemma~\ref{eq:omega2}, we have $t^* \otimes s^*= \Omega_{st}^*(1_B)$.
Applying the map $\Psi_{h^{-1},g^{-1}}$ to both sides of this equation yields 
\begin{align*}
    t^*s^*=\lambda^*_{st}(1_B)=(st)^*,
\end{align*}
and this is precisely the assertion of Condition~\ref{cond:fb4}.
Summarizing the above, we have:

\begin{teo}
\label{teo:fs}
Let $G$ be a discrete group, let $B$ be a unital C\Star algebra, let $\psi:G \to \Pic (B)$ be a group homomorphism, and let $(B_g,\Psi_{g,h})_{g,h \in G}$ be a factor system for $\psi$.
Define $\B := \cup_{g \in G} B_g$ and let $\pi:\B \to G$ denote the canonical projection. 
Then, with the multiplication and involution defined by Equations~\eqref{eq:multi} and~\eqref{eq:inv}, respectively, $\pi: \B \to G$ acquires the structure of a (discrete) saturated Fell bundle, denoted by $\B((B_g,\Psi_{g,h})_{g,h \in G})$. 
%for simplicity of notation.
%Then there exists a (discrete) saturated Fell bundle, denoted by $\B((B_g,\Psi_{g,h})_{g,h \in G})$, with unit fiber~$B$ and Picard homomorphism $\psi$.
\end{teo}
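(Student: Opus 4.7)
The plan is to verify all the Fell bundle axioms \ref{cond:fb1}--\ref{cond:fb7} together with the saturation property for the triple $(\B,G,\pi)$ equipped with the multiplication $m$ and involution ${}^*$ defined by \eqref{eq:multi} and \eqref{eq:inv}. Most axioms are inherited directly from the fact that each $\Psi_{g,h}$ is a Morita equivalence $B$-bimodule isomorphism preserving both inner products, so the substantive work will concentrate on the interaction between the multiplication and the involution.

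First I would dispose of the multiplicative content. Associativity of $m$ is exactly the associativity condition \eqref{eq:assoc} built into the factor system. The grading inclusion $B_gB_h\subseteq B_{gh}$ with bilinear restriction in \ref{cond:fb1} is immediate from the definition of $m$ as a composition of the balanced tensor product with $\Psi_{g,h}$, while the norm inequality \ref{cond:fb2} follows from the isometric character of $\Psi_{g,h}$ together with the standard submultiplicativity of the bimodule tensor norm. Saturation is automatic: since $\Psi_{g,h}$ is surjective onto $B_{gh}$, the image of $B_g\times B_h$ already has dense linear span in $B_{gh}$.

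The involutive side requires more care. Condition \ref{cond:fb3} is transparent from the anti-linearity of adjoints. For \ref{cond:fb5} the strategy is to expand $(s^*)^*$ via the explicit formula $\lambda_s^*(b)=\sum_i\langle s,s_i\rangle_B t_i$ arising from $\Psi_{g,g^{-1}}^*(b)=\sum_i s_i\otimes t_i$ and rewrite the result as an application of $\Psi_{e,g}\circ(\Psi_{g,g^{-1}}\otimes\id_g)$; the associativity condition \eqref{eq:assoc} then swaps this for $\Psi_{g,e}\circ(\id_g\otimes \Psi_{g^{-1},g})$, and the inner $\Psi_{g^{-1},g}$ collapses against $\Psi_{g^{-1},g}^*(1_B)$ to recover $s$. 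Once \ref{cond:fb5} is in hand, the identity $s^*s=\langle s,s\rangle_B$ simultaneously yields \ref{cond:fb6} and \ref{cond:fb7} from the standard properties of the right $B$-valued inner product on the Morita equivalence bimodule $B_g$.

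The main obstacle will be \ref{cond:fb4}, namely $(st)^*=t^*s^*$, which is where associativity of the factor system has to be deployed in a more delicate way. The key idea is to analyze the composite $\Omega_{st}:=\lambda_{st}\circ\Psi_{h^{-1},g^{-1}}$ on $B_{h^{-1}}\otimes_B B_{g^{-1}}$ by two independent computations: a direct one using \eqref{eq:assoc} to express $\Omega_{st}(u\otimes v)$ as $\lambda_s(\lambda_t(u)v)$, and an inner-product manipulation identifying the adjoint as $\Omega_{st}^*(b)=t^*\otimes\lambda_s^*(b)$. Applying $\Psi_{h^{-1},g^{-1}}$ to $\Omega_{st}^*(1_B)$ then forces $t^*s^*=(st)^*$. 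Isolating this adjoint computation as the preliminary Lemma~\ref{lem:cond:fb4} keeps the argument readable, and with \ref{cond:fb4} secured the theorem follows.
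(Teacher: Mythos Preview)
Your proposal is correct and follows the paper's argument essentially verbatim: the paper handles \ref{cond:fb1}, \ref{cond:fb2}, associativity, and saturation exactly as you describe, proves \ref{cond:fb5} by the same expand-swap-collapse using \eqref{eq:assoc}, derives \ref{cond:fb6} and \ref{cond:fb7} from $s^*s=\langle s,s\rangle_B$, and isolates the computation of $\Omega_{st}$ and its adjoint as Lemma~\ref{lem:cond:fb4} to obtain \ref{cond:fb4}. There is nothing to add.
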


Using Theorem~\ref{teo:fs} in conjunction with Lemma~\ref{lem:fb_top}, we arrive at the desired result:

\begin{cor}
\label{cor:fs}
Under the assumptions of Theorem~\ref{teo:fs} with ``discrete'' replaced by ``locally compact'', suppose further that $\Gamma$ is a fundamental family of sections of $\pi:\B \to G$ (see Section~\ref{sec:construction_fb}), then there exists a unique topology on $\B$ that makes $\pi:\B \to G$ a Fell bundle, denoted by $\B((B_g,\Psi_{g,h})_{g,h \in G})_\Gamma$, and all sections in $\Gamma$ are continuous, \ie, $\Gamma \subseteq \Gamma(G,\B)$.
\end{cor}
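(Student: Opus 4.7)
The plan is to decouple the algebraic content from the topological content and then glue them together. First, treating $G$ as a discrete group, I would invoke Theorem~\ref{teo:fs} directly. This produces, on $\B := \cup_{g \in G} B_g$, the multiplication $m$ defined by~\eqref{eq:multi} and the involution defined by~\eqref{eq:inv}, and it guarantees that the resulting triple $(\B,G,\pi)$ satisfies the purely algebraic axioms~\ref{cond:fb1}--\ref{cond:fb7} together with saturation. At this stage no topology on $\B$ has been chosen; only the Banach space structure on each individual fiber $B_g$ is in use.

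The second step is to promote this discrete Fell bundle to a genuine Fell bundle over the locally compact group $G$. Here I would apply Lemma~\ref{lem:fb_top}. Its hypotheses are precisely what a fundamental family of sections is designed to supply: $\Gamma$ is a \Star algebra of compactly supported sections of $\pi$, the set $\{\sigma(g):\sigma\in\Gamma\}$ is dense in $B_g$ for every $g\in G$, and $g\mapsto \Vert\sigma(g)\Vert$ is continuous for every $\sigma\in\Gamma$. The lemma then yields a unique topology on $\B$ such that $\pi:\B\to G$ becomes a Fell bundle and $\Gamma \subseteq \Gamma_c(G,\B)$. By construction this Fell bundle inherits the multiplication and involution produced in the first step, so the saturation and the identification of the unit fiber with $B$ are preserved, and we are entitled to denote it $\B((B_g,\Psi_{g,h})_{g,h\in G})_\Gamma$.

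There is no real obstacle to overcome beyond checking that the hypotheses of Lemma~\ref{lem:fb_top} match; the only point that warrants a brief remark is that the algebraic operations from Theorem~\ref{teo:fs} are automatically compatible with the topology produced by Lemma~\ref{lem:fb_top}, since that lemma builds a topology that renders both the sum of sections and the $*$-algebra operations on $\Gamma$ jointly continuous. Uniqueness is already part of the statement of Lemma~\ref{lem:fb_top}, so nothing further needs to be verified on that front.
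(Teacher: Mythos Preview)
Your proposal is correct and follows exactly the approach the paper takes: it obtains the corollary by combining Theorem~\ref{teo:fs} (for the algebraic Fell bundle structure) with Lemma~\ref{lem:fb_top} (for the unique compatible topology supplied by the fundamental family~$\Gamma$). The paper does not give a separate proof beyond this one-line indication, so your write-up is, if anything, more detailed than what appears there.
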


\section{Classification of Fell bundles}
\label{sec:classification}

In the previous section, we demonstrated how factor systems lead to the construction of saturated Fell bundles (and vice versa). 
In this section, we establish a classification theory for this type of bundles using (topological) group cohomology.

\subsection{Classification of Fell bundles: The discrete case}
\label{sec:classification_alg}

For expediency we begin with the purely algebraic classification.
Here and subsequently, let $G$ be a discrete group and let $B$ be a unital C\Star algebra.

To begin with, we denote by $\Ext(G,B)$ the set of equivalence classes of saturated Fell bundles over $G$ with unit fiber~$B$ (see Definition~\ref{def:equivalence_fb}).
The equivalence class of a saturated Fell bundle $(\B,G,\pi)$ with unit fiber $B$ is denoted by $[(\B,G,\pi)]$.
It is evident that the map
\begin{align*}
    \Ext(G,B) \ni [(\B,G,\pi)] \mapsto \psi_\B \in \Hom(G,\Pic(B)),
\end{align*}
$\psi_\B$ being the Picard homomorphism associated with $(\B,G,\pi)$ (see Lemma~\ref{lem:pic}), is well-defined, yielding a partition of $\Ext(G,B)$ into the following subsets:

\begin{defi}
Under the above hypotheses, for $\psi \in \Hom(G,\Pic(B))$, we define:
\begin{align*}
    \Ext(G,B,\psi) := \{[(\B,G,\pi)]\in \Ext(G,B) : \psi_\B = \psi\}.
\end{align*}  
\end{defi}

Noteworthily, if $\psi:G \to \Pic(B)$ is the trivial group homomorphism, then $\Ext(G,B,\psi)$ is nonempty, as it contains the class of the trivial Fell bundle $G \times B$ (see Example~\ref{ex:trivialFB}).
However, for an arbitrary group homomorphism $\psi:G \to \Pic(B)$, the set $\Ext(G,B,\psi)$ may be empty. 
We defer addressing this issue until Section~\ref{sec:non-emptiness} and focus first on characterizing $\Ext(G,B,\psi)$ and its elements.

At the outset, we recall from~\cite[Prop.~5.6]{SchWa15} that the map $\mu: \Pic(B) \ni [M] \mapsto \psi_M \in \Aut(UZ(B))$ is a group homomorphism, where $\psi_M$ is the group automorphism described in Lemma~\ref{isoME}. Consequently, each $\psi \in \Hom(G,\Pic(B))$ induces a $G$-module structure on $UZ(B)$, defined by $\mu \circ \psi$, which facilitates the use of group cohomology, specifically
\begin{align*}
    H^p(G,UZ(B))_{\psi} := H^p(G,UZ(B))_{\mu \circ \psi}
\end{align*}
for all positive integers $p$ (for their definition, see, \eg,~\cite[Chap.~IV]{MacLane95}).
Having disposed of these preliminary steps, we now proceed as follows:

\begin{defi}~\label{def:equivalence} 
Let $G$ be a discrete group, let $B$ be a unital C\Star algebra, and let $\psi:G \to \Pic (B)$ be a group homomorphism.
Two factor systems
\begin{align*}
    (B^1_g,\Psi^1_{g,h})_{g,h \in G}
    &&
    \text{and}
    &&
    (B^2_g,\Psi^2_{g,h})_{g,h \in G}
\end{align*}
are said to be  \emph{equivalent} if there exists a family $(\Psi_g:B^1_g \to B^2_g)_{g \in G}$ of Morita equivalence $B$-bimodule isomorphisms satisfying the following compatibility condition:
\begin{align}\label{eq:comconfac}
	\Psi^2_{g,h}\circ(\Psi_g \otimes \Psi_h) = \Psi_{gh} \circ \Psi^1_{g,h}
    &&
    \forall g,h \in G.
\end{align}
Note that, necessarily, $\Psi_e = \id_B$.
\end{defi}

The following statement is readily checked, so we leave the details to the reader.

\begin{lemma}\label{lem:equivalence}
Let $G$ be a discrete group, let $B$ be a unital C\Star algebra, let $\psi:G \to \Pic (B)$ be a group homomorphism, and let $(B^1_g,\Psi^1_{g,h})_{g,h \in G}$, $(B^2_g,\Psi^2_{g,h})_{g,h \in G}$ be factor systems for $\psi$.
Then the following statements are equivalent:
\begin{enumerate}[label=(\alph*),ref=\ref{teo:equivalence}.{(\alph*)}]
\item\label{en:factor_sys_equiv}
	The factor systems are equivalent.
\item\label{en:dyn_sys_equiv}
	The Fell bundles $\B((B^1_g,\Psi^1_{g,h})_{g,h \in G})$ and $\B((B^2_g,\Psi^2_{g,h})_{g,h \in G})$ are equivalent.
\end{enumerate}
\end{lemma}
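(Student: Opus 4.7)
The plan is a direct two-way verification: build an equivalence of Fell bundles out of a family $(\Psi_g)_{g\in G}$ satisfying~\eqref{eq:comconfac}, and conversely read off such a family from a Fell bundle equivalence $\Psi$. Both directions boil down to translating~\eqref{eq:comconfac} into the multiplicative and involutive identities of Definition~\ref{def:equivalence_fb}, and vice~versa.

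\textbf{Direction \ref{en:factor_sys_equiv} $\Rightarrow$ \ref{en:dyn_sys_equiv}.} Given a compatible family $(\Psi_g)_{g\in G}$, I would assemble the map $\Psi:\B^1 \to \B^2$ that restricts to $\Psi_g$ on each fiber. Since each $\Psi_g$ is a Morita equivalence $B$-bimodule isomorphism it is in particular a linear isometry, so $\Psi$ qualifies as an equivalence of the underlying Banach bundles once fiber preservation is noted; normalization $\Psi_e = \id_B$ is forced by~\eqref{eq:comconfac} with $g=h=e$ together with $\Psi^i_{e,e} = \id_B$. Multiplicativity $\Psi_g(s)\Psi_h(t) = \Psi_{gh}(st)$ is exactly~\eqref{eq:comconfac} applied to $s \otimes t$ under the identification of $m^i_{g,h}$ with $\Psi^i_{g,h}$ coming from~\eqref{eq:multi}.

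\textbf{The main subtlety} is the involution, since the involutions in $\B^1$ and $\B^2$ are defined implicitly via~\eqref{eq:inv}. For $s \in B^1_g$, write $\lambda^i_s(t) := \Psi^i_{g,g^{-1}}(s \otimes t)$. Applying~\eqref{eq:comconfac} with $h = g^{-1}$ and using $\Psi_e = \id_B$ yields the identity
\begin{equation*}
    \lambda^2_{\Psi_g(s)} \circ \Psi_{g^{-1}} = \lambda^1_s
    \quad \text{as maps } B^1_{g^{-1}} \to B.
\end{equation*}
Taking adjoints and using that Morita equivalence bimodule isomorphisms are unitary, so $\Psi_{g^{-1}}^{\ast} = \Psi_{g^{-1}}^{-1}$, gives $(\lambda^1_s)^{\ast} = \Psi_{g^{-1}}^{-1} \circ (\lambda^2_{\Psi_g(s)})^{\ast}$. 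Evaluating at $1_B$ and invoking~\eqref{eq:inv} in both bundles yields $s^{\ast} = \Psi_{g^{-1}}^{-1}(\Psi_g(s)^{\ast})$, equivalently $\Psi_{g^{-1}}(s^{\ast}) = \Psi_g(s)^{\ast}$, as required.

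\textbf{Direction \ref{en:dyn_sys_equiv} $\Rightarrow$ \ref{en:factor_sys_equiv}.} Given an equivalence $\Psi:\B^1 \to \B^2$ of Fell bundles, set $\Psi_g := \Psi_{|B^1_g}$. Each $\Psi_g$ is a linear isometry by hypothesis. To upgrade it to a Morita equivalence $B$-bimodule isomorphism, I would use that $\Psi_e = \id_B$ together with multiplicativity to obtain $\Psi_g(bs) = b\Psi_g(s)$ and $\Psi_g(sb) = \Psi_g(s)b$ for $b \in B$, and then combine multiplicativity with the involutive identity to get
\begin{equation*}
    \langle \Psi_g(s), \Psi_g(t) \rangle_B
    = \Psi_g(s)^{\ast} \Psi_g(t)
    = \Psi_{g^{-1}}(s^{\ast}) \Psi_g(t)
    = \Psi_e(s^{\ast} t)
    = \langle s, t \rangle_B,
\end{equation*}
and analogously for the left inner product. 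Finally,~\eqref{eq:comconfac} is just the multiplicativity identity $\Psi_{gh}(st) = \Psi_g(s)\Psi_h(t)$ rewritten via the identifications $m^i_{g,h} = \Psi^i_{g,h}$.

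I expect the adjoint computation in Direction \ref{en:factor_sys_equiv} $\Rightarrow$ \ref{en:dyn_sys_equiv} to be the only step requiring genuine care; everything else is bookkeeping with the correspondence between fiberwise multiplications and the maps $\Psi^i_{g,h}$.
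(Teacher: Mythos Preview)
Your proposal is correct; the paper itself omits the proof entirely, stating only that the statement ``is readily checked, so we leave the details to the reader.'' Your two-way verification is exactly the natural argument one would supply, and the only nontrivial step---the adjoint computation for the involution via $\lambda^2_{\Psi_g(s)}\circ\Psi_{g^{-1}}=\lambda^1_s$---is handled cleanly.
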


\begin{lemma}\label{lem:cocycles}
Let $G$ be a discrete group and let $B$ be a unital C\Star algebra. 
Suppose that $\psi:G \to \Pic(B)$ is a group homomorphism such that $\Ext(G,B,\psi) \neq \emptyset$. 
For each $g \in G$ choose a representative $B_g$ of $\psi(g)$, with $B_e := B$. 
Then the following assertions hold:
\begin{enumerate}[label={(\roman*)},ref=\ref{lem:autME}.{(\roman*)}]
\item
\label{lem:autME_rep}
    Each class in $\Ext(G,B,\psi)$ can be represented by a Fell bundle of the following form: $\B((B_g,\Psi_{g,h})_{g,h \in G})$ (see Theorem~\ref{teo:fs}).
\item
\label{lem:autME_Z}
    Any other Fell bundle $\B((B_g,\Psi'_{g,h})_{g,h \in G})$ representing a class in $\Ext(G,B,\psi)$ satisfies $\Psi'_{g,h} = \omega(g,h) \Psi_{g,h} := \varphi_{B_{gh}}(\omega(g,h)) \Psi_{g,h}$ (see Lemma~\ref{autME}), for $g,h \in G$, where $\omega \in Z^2(G,UZ(B))_{\psi}$.
\item
\label{lem:autME_B}
    The Fell bundles $\B((B_g,\Psi_{g,h})_{g,h \in G})$ and $\B((B_g,\omega(g,h)\Psi_{g,h})_{g,h \in G})$ are equivalent if and only if $\omega \in B^2(G,UZ(B))_{\psi}$.
\end{enumerate}
\end{lemma}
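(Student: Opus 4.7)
The plan is to prove the three parts of the lemma in order, each resting on the factor system / Fell bundle correspondence from Theorem~\ref{teo:fs}, the equivalence criterion from Lemma~\ref{lem:equivalence}, and the parametrization of bimodule automorphisms by $UZ(B)$ from Lemma~\ref{autME}. For part~(i), I would start with any representative $(\B,G,\pi)$ of a class in $\Ext(G,B,\psi)$ and its canonically associated factor system $(B_g^\B, m_{g,h})_{g,h\in G}$ (Remark~\ref{rem:fs:induced}). Since $[B_g^\B] = \psi(g) = [B_g]$ for every $g \in G$, I pick Morita equivalence $B$-bimodule isomorphisms $\Phi_g: B_g^\B \to B_g$ with $\Phi_e = \id_B$ and transport the multiplication by setting $\Psi_{g,h} := \Phi_{gh} \circ m_{g,h} \circ (\Phi_g^{-1}\otimes\Phi_h^{-1})$. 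Associativity of $(m_{g,h})$ is then inherited by $(\Psi_{g,h})$, so $(B_g,\Psi_{g,h})_{g,h\in G}$ is a factor system for $\psi$ on the prescribed representatives, and the family $(\Phi_g)$ implements an equivalence between $\B$ and $\B((B_g,\Psi_{g,h})_{g,h\in G})$ through Lemma~\ref{lem:equivalence}.

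For part~(ii), given two factor systems $(B_g,\Psi_{g,h})$ and $(B_g,\Psi'_{g,h})$ on the same representatives, the composition $\Psi'_{g,h} \circ \Psi_{g,h}^{-1}$ lies in $\Aut(B_{gh})$, so Lemma~\ref{autME} supplies a unique $\omega(g,h) \in UZ(B)$ with $\Psi'_{g,h} = \varphi_{B_{gh}}(\omega(g,h)) \circ \Psi_{g,h}$. Normalization $\omega(e,g) = \omega(g,e) = 1_B$ follows from item~3 of Remark~\ref{rem:fs}, which forces $\Psi_{e,g} = \Psi'_{e,g}$ and $\Psi_{g,e} = \Psi'_{g,e}$. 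To check the cocycle identity, I would substitute this relation into~\eqref{eq:assoc} applied to an elementary tensor $s \otimes t \otimes u$: on the left-hand side the central unitary $\omega(g,h)$ moves freely across $\Psi_{gh,k}$ by left $B$-linearity, whereas on the right-hand side $\omega(h,k)$ must first cross the tensor symbol via $s\,\omega(h,k) = \psi_{B_g}(\omega(h,k))\,s$ from Lemma~\ref{isoME}, thereby picking up the expected $G$-twist. Canceling the common associator $\Psi_{gh,k}(\Psi_{g,h}(s\otimes t)\otimes u)$ then produces
\[
\omega(gh,k)\,\omega(g,h) = \omega(g,hk)\,\psi_{B_g}(\omega(h,k)),
\]
which is precisely the $2$-cocycle identity in $Z^2(G,UZ(B))_\psi$.

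For part~(iii), Lemma~\ref{lem:equivalence} reduces equivalence of the two Fell bundles to the existence of a family $(\Phi_g:B_g\to B_g)$ of Morita equivalence $B$-bimodule automorphisms, with $\Phi_e = \id_B$, satisfying the compatibility~\eqref{eq:comconfac}, and Lemma~\ref{autME} parametrizes any such family uniquely by a normalized $1$-cochain $\eta : G \to UZ(B)$ via $\Phi_g(s) = \eta(g)\,s$. Inserting this parametrization into~\eqref{eq:comconfac} and once again applying $s\,\eta(h) = \psi_{B_g}(\eta(h))\,s$ reduces the compatibility condition to the scalar identity
\[
\omega(g,h) = \eta(gh)\,\psi_{B_g}(\eta(h))^{-1}\,\eta(g)^{-1},
\]
which expresses $\omega$ as a $2$-coboundary (up to standard sign conventions) and is reversible. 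The main technical point throughout is the careful bookkeeping of central unitaries against the bimodule structures: every passage of a central element across a tensor factor introduces the twist $\psi_{B_g}$, and it is this transfer which imprints the $G$-module structure $\mu\circ\psi$ on $UZ(B)$ in both the cocycle and coboundary formulas. Once this pattern is isolated, the remaining verifications are mechanical.
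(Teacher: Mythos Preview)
Your proposal is correct and follows essentially the same route as the paper's proof: transport the multiplication along chosen bimodule isomorphisms for part~(i), invoke Lemma~\ref{lem:autME} to extract the $UZ(B)$-valued cochain and use Lemma~\ref{isoME} to push central unitaries across tensor factors for the cocycle identity in part~(ii), and reduce part~(iii) to the coboundary condition via Lemma~\ref{lem:equivalence} and the same parametrization. The only cosmetic difference is the direction of the transporting isomorphisms in part~(i) (you map $B_g^\B \to B_g$, the paper maps $B_g \to B'_g$), which is immaterial.
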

\begin{proof}
\begin{enumerate}[label={(\roman*)}]
\item
    Let $(\B',\pi',G)$ be a saturated Fell bundle representing a class in $\Ext(G,B,\psi)$.
    By Remark~\ref{rem:fs:induced}, it admits a factor system for $\psi$ of the form $(B'_g,m'_{g,h})_{g,h \in G}$.
    As $B'_g$ belongs to $\psi(g)$ for each $g \in G$, there exists a family $(\Psi_g:B_g \to B'_g)_{g \in G}$ of Morita equivalence $B$-bimodule isomorphisms.
    This yields a factor system $(B_g,\Psi_{g,h})_{g,h \in G}$ for $\psi$, equivalent to $(B'_g,m'_{g,h})_{g,h \in G}$, by putting $\Psi_{g,h} := \Psi_{gh}^* \circ m'_{g,h} \circ (\Psi_g \otimes_B \Psi_h)$, which is easily verified.
    Hence, the assertion follows from~Lemma~\ref{lem:equivalence}.
\item 
    Let $\B((B_g,\Psi'_{g,h})_{g,h \in G})$ be any other Fell bundle representing a class in $\Ext(G,B,\psi)$. 
    By Lemma~\ref{autME}, there exists a unique element $\omega(g,h) \in UZ(B)$ such that $\Psi'_{g,h} = \omega(g,h) \Psi_{g,h}$ for all $g,h$.
    It is readily verified that the corresponding map $\omega: G \times G \to UZ(B)$ takes the value $1_B$ whenever one of its arguments is $e$. 
    Furthermore, Lemma~\ref{isoME} implies that 
    \begin{align*}
        \Psi'_{g,hk} \circ (\id_g \otimes_B \Psi'_{h,k}) &= \Psi'_{g,hk} \circ (\id_g \otimes_B \omega(h,k) \Psi_{h,k})
        \\
        &= \Psi'_{g,hk} \circ (\id_g \omega(h,k) \otimes_B \Psi_{h,k})
        \\
        &= \Psi'_{g,hk} \circ (\psi_{B_g}(\omega(h,k)) \id_g \otimes_B \Psi_{h,k})
        \\
        &= \psi_{B_g}(\omega(h,k)) \Psi'_{g, hk} \circ (\id_g \otimes_B \Psi_{h,k})
        \\
        &= \psi_{B_g}(\omega(h,k)) \omega(g,hk) \Psi_{g, hk} \circ (\id_g \otimes_B \Psi_{h,k})
    \end{align*}
    for all $g,h,k\in G$.
    On the other hand, it is evident that
    \begin{align*}
        \Psi'_{gh,k} \circ (\Psi'_{g,h} \otimes \id_k) = \omega(g,h) \omega(gh,k) \Psi_{gh,k} \circ (\Psi_{g,h} \otimes \id_k)
    \end{align*}
    for all $g,h,k\in G$.
    Hence, $\omega \in Z^2(G,UZ(B))_{\psi}$ as claimed.
\item
    First, suppose that $\omega \in B^2(G,UZ(B))_{\psi}$, meaning there exists $\varpi \in C^1(G,UZ(B))$ such that $\omega = d_{\psi}\varpi$.
    Utilizing this element $\varpi$, a family $(\varphi_{B_g}(\varpi(g)):B_g \to B_g)_{g \in G}$ of Morita equivalence $B$-bimodules is obtained (see Lemma~\ref{autME}), through which an equivalence between $\B((B_g,\Psi_{g,h})_{g,h \in G})$ and $\B((B_g,\omega(g,h)\Psi_{g,h})_{g,h \in G})$ is implemented, as can be easily verified.
    This establishes the ``if'' direction.
    
    Conversely, suppose that $\B((B_g,\Psi_{g,h})_{g,h \in G})$ and $\B((B_g,\omega(g,h)\Psi_{g,h})_{g,h \in G})$ are equivalent.
    Lemma~\ref{autME} guarantees the existence of $\varpi \in C^1(G,UZ(B))$ that implements this equivalence.
    Moreover, from Equation~\eqref{eq:comconfac} it follows that $\omega=d_{\psi}\varpi$, \ie, $\omega \in B^2(G,UZ(B))_{\psi}$. 
    This proves the ``only if'' direction.
\end{enumerate}
\end{proof}

\begin{cor}\label{cor:class}
Let $G$ be a discrete group and let $B$ be a unital C\Star algebra. 
Suppose that $\psi:G \to \Pic(B)$ is a group homomorphism such that $\Ext(G,B,\psi)\neq\emptyset$. 
Then the map
\begin{gather*}
	H^2(G,UZ(B))_{\psi} \times \Ext(G,B,\psi) \to \Ext(G,B,\psi),
    \\
	\left([\omega],[\B((B_g,\Psi_{g,h})_{g,h \in G})]\right) \mapsto 
    [\B((B_g,\omega \Psi_{g,h})_{g,h \in G})]
\end{gather*}
is a well-defined simply transitive action.
\end{cor}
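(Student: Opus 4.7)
The plan is to derive the corollary directly from Lemma~\ref{lem:cocycles}, which already encodes the dictionary between factor systems built on a fixed family of fiber representatives and 2-cocycles in $Z^2(G,UZ(B))_\psi$. First I would fix once and for all representatives $B_g \in \psi(g)$ with $B_e := B$; by~\ref{lem:autME_rep}, every class in $\Ext(G,B,\psi)$ admits a representative of the form $\B((B_g,\Psi_{g,h})_{g,h \in G})$ built on these fibers, so I may work throughout with factor systems on the fixed family, and the formula in the statement is well-posed as a prescription on representatives.

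For well-definedness of the action, two independence checks are needed. If $\omega'$ and $\omega$ differ by a coboundary $d_\psi \varpi$, then the 2-cocycle $\omega' \omega^{-1} = d_\psi \varpi$ is a coboundary, so~\ref{lem:autME_B} yields equivalence of $\B((B_g,\omega'\Psi_{g,h}))$ and $\B((B_g,\omega\Psi_{g,h}))$. If $(B_g,\Psi'_{g,h})$ is another factor system on the same fibers representing the same class as $(B_g,\Psi_{g,h})$, then~\ref{lem:autME_Z} produces $\eta \in Z^2(G,UZ(B))_\psi$ with $\Psi'_{g,h} = \eta(g,h)\Psi_{g,h}$, and~\ref{lem:autME_B} applied to the equivalence forces $\eta \in B^2(G,UZ(B))_\psi$. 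Hence $\omega\Psi'_{g,h} = (\omega\eta)\Psi_{g,h}$ with $\omega\eta$ cohomologous to $\omega$, and a further application of~\ref{lem:autME_B} provides the required equivalence of outputs. The group-action axioms are then immediate: the trivial cocycle sends $\Psi_{g,h}$ to itself, and $(\omega_1\omega_2)\Psi_{g,h}$ coincides with $\omega_1(\omega_2\Psi_{g,h})$, so pointwise multiplication of cocycles matches composition of actions.

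Transitivity is again supplied by~\ref{lem:autME_Z}: given two classes represented on the fixed fibers by $\B((B_g,\Psi^1_{g,h}))$ and $\B((B_g,\Psi^2_{g,h}))$, the lemma delivers $\omega \in Z^2(G,UZ(B))_\psi$ with $\Psi^2_{g,h} = \omega(g,h)\Psi^1_{g,h}$, so $[\omega]$ carries the first class to the second. Freeness is the converse implication: if $[\omega]$ stabilizes the class of $\B((B_g,\Psi_{g,h}))$, then $\B((B_g,\omega\Psi_{g,h}))$ is equivalent to $\B((B_g,\Psi_{g,h}))$, and~\ref{lem:autME_B} forces $\omega \in B^2(G,UZ(B))_\psi$, i.e.\ $[\omega]$ is trivial.

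The main obstacle is the well-definedness step, where both the cocycle representative and the factor system representative can vary simultaneously, and one must confirm that the joint variation is absorbed by a single coboundary adjustment. This is resolved cleanly by combining the three items of Lemma~\ref{lem:cocycles} in sequence, as above; once these dependencies are untangled, transitivity, freeness, and the action axioms drop out at no extra cost.
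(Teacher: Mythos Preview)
Your proposal is correct and matches the paper's approach: the corollary is stated without proof, as an immediate consequence of Lemma~\ref{lem:cocycles}, and your argument spells out precisely the routine verifications (well-definedness, action axioms, transitivity, freeness) that the paper leaves implicit.
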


\begin{example}
\label{ex:cfld}
Let $G$ be a discrete group and let $B = \mathbb{C}$. 
By~\cite[Sec.~3]{BrGrRi77}, $\Pic(\mathbb{C})$ is trivial, which implies that there exists only the trivial group homomorphism $\psi: G \to \Pic(\mathbb{C})$. 
Fell bundles associated with $\psi$ certainly exist, are characterized by the property that every fiber is one-dimensional (\cf Example~\ref{ex:flb}).
For instance, the trivial Fell bundle over $G$ with constant fiber $\mathbb{C}$ provides an example (see Example~\ref{ex:trivialFB}).
Thus, Corollary~\ref{cor:class} shows that equivalence classes of Fell line bundles over $G$ are parametrized by $H^2(G, \mathbb{T})$.

For example, when $G = \mathbb{Z}$, $H^2(\mathbb{Z}, \mathbb{T})$ is trivial (see, \eg,~\cite[Chap.~VI]{MacLane95}).
Consequently, up to equivalence, there exists only one Fell line bundle over $\mathbb{Z}$, namely the trivial one.
In contrast, for $G = \mathbb{Z}^2$, $H^2(\mathbb{Z}^2, \mathbb{T})$ is isomorphic to $\mathbb{T}$ (see, \eg,~\cite[Prop.~II.4]{Neeb07b}).
\end{example}

\subsection*{Non-emptiness of $\Ext(G,B,\psi)$}
\label{sec:non-emptiness}

Let $\psi:G \to \Pic(B)$ be a group homomorphism.
In this section, we establish a cohomological criterion ensuring that $\Ext(G,B,\psi)$ is nonempty.
As discussed at the beginning of Section~\ref{sec:construction}, $\psi$ allows for the choice of a family $(B_g,\Psi_{g,h})_{g,h \in G}$, where
\begin{itemize}
\item 
    $B_g$, for $g \in G$, is a representative of $\psi(g)$, with $B_e:=B$, and
\item 
    $\Psi_{g,h}: B_g \otimes_B B_h\to B_{gh}$, for $g,h \in G$, is a Morita equivalence $B$-bimodule isomorphism, with $\Psi_{e,e}:=\id_B$, and such that $\Psi_{e,g}$ and $\Psi_{g,e}$, for $g \in G$, correspond to the left and right actions of $B$ on $B_g$, respectively. 
\end{itemize}
Notably, this family does not need to form a factor system for $\psi$ in the sense of Definition~\ref{def:fs}, \ie, the associativity condition~\eqref{eq:assoc} need not be satisfied.
However, we may instead consider the automorphisms
\begin{align*}
    d_\B \Psi_{g,h,k} := \Psi_{gh,k} \circ (\Psi_{g,h} \otimes \id_k) \circ (\id_g \otimes \Psi^*_{h,k}) \circ \Psi^*_{g,hk} : B_{ghk} \to B_{ghk}
\end{align*}
for all $g,h,k \in G$.
The family $(d_\B \Psi_{g,h,k})_{g,h,k \in G}$ can be interpreted as an obstruction to the associativity of the multiplication defined in~\eqref{eq:multi}.
On the other hand, Lemma~\ref{autME} implies that the family $(d_\B \Psi_{g,h,k})_{g,h,k \in G}$ can also be viewed as a normalized $UZ(B)$-valued 3-cochain on $G$, \ie, an element of $C^3(G,UZ(B))$, say $d_\B \Psi$. 
In fact, even more is true: $d_\B \Psi \in Z^3(G,UZ(B))_{\psi}$.
For brevity, we omit the detailed calculation here and instead refer to~\cite[Lem.~1.10(5)]{Neeb07}, for example.

\begin{lemma}
    The element $[d_\B \Psi] \in H^3(G,UZ(B))_{\psi}$ does not depend on any of the choices made.
\end{lemma}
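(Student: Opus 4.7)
The plan is to take any two families $(B_g,\Psi_{g,h})_{g,h \in G}$ and $(B'_g,\Psi'_{g,h})_{g,h \in G}$ satisfying the hypotheses and to show that the associated $3$-cochains $d_\B\Psi$ and $d_{\B'}\Psi'$ differ by an explicit $2$-coboundary. I first reduce to the case of a common underlying collection of bimodules: since each pair $B_g, B'_g$ represents the same class $\psi(g) \in \Pic(B)$, I would pick Morita equivalence $B$-bimodule isomorphisms $\Phi_g : B_g \to B'_g$ with $\Phi_e = \id_B$, and transport
\[
    \Psi^t_{g,h} := \Phi_{gh} \circ \Psi_{g,h} \circ (\Phi_g^{-1} \otimes_B \Phi_h^{-1}) : B'_g \otimes_B B'_h \to B'_{gh}.
\]
A direct check then yields $d_{\B'}\Psi^t_{g,h,k} = \Phi_{ghk} \circ d_\B\Psi_{g,h,k} \circ \Phi_{ghk}^{-1}$ for all $g,h,k$, and since conjugation by the $B$-bimodule isomorphism $\Phi_{ghk}$ preserves the $UZ(B)$-value attached to an automorphism via Lemma~\ref{autME}, I conclude that $d_{\B'}\Psi^t = d_\B\Psi$ as normalized $3$-cochains in $C^3(G, UZ(B))$.

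In the second step I compare the two families $(B'_g,\Psi^t_{g,h})$ and $(B'_g,\Psi'_{g,h})$, which now share the same bimodules. Lemma~\ref{autME} produces a unique $\omega \in C^2(G, UZ(B))$ with $\Psi'_{g,h} = \omega(g,h) \Psi^t_{g,h}$, and $\omega$ is normalized because $\Psi'_{e,g}$ and $\Psi^t_{e,g}$ both implement the canonical left action of $B$ on $B'_g$ (and similarly on the right). Substituting $\Psi' = \omega \Psi^t$ into the defining formula for $d_{\B'}\Psi'$, one must transport the four scalars $\omega(gh,k), \omega(g,h), \overline{\omega(h,k)}, \overline{\omega(g,hk)}$ to the outermost position. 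The first, second, and fourth commute straight through by $B$-bilinearity, while the third sits on an inner tensor factor in $\id_g \otimes \overline{\omega(h,k)}\Psi^{t*}_{h,k}$; using the identity $s \otimes_B (cv) = \psi_{B_g}(c) s \otimes_B v$ coming from Lemma~\ref{isoME}, together with $\psi_{B_g} = \mu(\psi(g))$, it rewrites as the $g$-transform of $\overline{\omega(h,k)}$ acting on the outermost factor. Assembling the four resulting twists produces exactly the multiplicative group-cohomology differential, yielding
\[
    d_{\B'}\Psi'_{g,h,k} = (d\omega)(g,h,k)^{-1} \cdot d_{\B'}\Psi^t_{g,h,k}
    \quad \text{for all } g,h,k \in G.
\]
Combined with the first step, this gives $d_{\B'}\Psi' \cdot (d_\B\Psi)^{-1} \in B^3(G, UZ(B))_\psi$, whence $[d_{\B'}\Psi'] = [d_\B\Psi]$ in $H^3(G, UZ(B))_\psi$. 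Independence of the auxiliary choice of the $\Phi_g$'s is automatic, since the argument applies to every such choice.

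The main obstacle I anticipate is the scalar-transport bookkeeping in the second step: one needs to be scrupulous in tracking how left multiplication by an element of $UZ(B)$ on an inner tensor factor is rewritten, via $\mu \circ \psi$, as a left multiplication on the outermost factor, and then verify that the four resulting twists assemble precisely into the group-cohomology differential of $\omega$. Once this identification is made, the rest of the argument reduces to routine $B$-bilinearity manipulations.
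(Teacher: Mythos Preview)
Your proposal is correct and follows essentially the same approach as the paper's proof, just with the two independence arguments swapped in order: the paper first fixes the bimodules $(B_g)$ and varies the $(\Psi_{g,h})$, then varies the $(B_g)$ via transport, whereas you transport first and then compare the $\Psi$'s. Your explicit remark that conjugation by $\Phi_{ghk}$ preserves the $UZ(B)$-value (because $\Phi_{ghk}$ is left $B$-linear) is a detail the paper leaves implicit, and your bookkeeping of the four scalar twists assembling into $(d_\psi\omega)^{-1}$ is accurate.
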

\begin{proof}
We first show that $[d_\B \Psi]$ does not depend on the choice of the family $(\Psi_{g,h})_{g,h \in G}$.
To this end, let $(\Psi'_{g,h})_{g,h \in G}$ be another such choice.
By Lemma~\ref{autME}, there exists a unique element $\varpi(g,h) \in UZ(B)$ such that $\Psi'_{g,h} = \omega(g,h) \Psi_{g,h} := \varphi_{B_g}(\varpi(g,h))$ for all $g,h$.
Then
\begin{align*}
    \Psi'_{gh,k} \circ (\Psi'_{g,h} \otimes_B \id_k)
    =
    \varpi(gh,k) \varpi (g,h) d_\B \Psi_{g,h,k} (\Psi_{g,hk} \circ (\id_g \otimes_B \Psi_{h,k}))
\end{align*}
for all $g,h,k \in G$.
Moreover,
\begin{align*}
    d_\B \Psi'_{g,h,k}(\Psi'_{g,hk} \circ (\id_g \otimes_B \Psi'_{h,k}))
    =
    d_\B \Psi'_{g,h,k} \varpi(g,hk) \psi_{B_g}(\varpi(h,k)(\Psi_{g,hk} \circ (\id_g \otimes_B \Psi_{h,k}))
\end{align*}
for all $g,h,k \in G$.
Comparing these two expressions clearly establishes that the 3-cocycles $d_\B \Psi'$ and $d_\B \Psi$ are cohomologous.
We now demonstrate that $[d_\B \Psi]$ does not depend on the choice of the family~$(B_g)_{g \in G}$. 
For this purpose, let $(B'_g)_{g \in G}$ be another such choice, and let $(\Psi_g: B_g \to B'_g)_{g \in G}$ be a family of Morita equivalence $B$-bimodule isomorphisms. 
These induce a new family $(\Psi''_{g,h}: B'_g \otimes_B B'_h \to B'{gh})_{g,h \in G}$ of Morita equivalence $B$-bimodule isomorphisms, defined by $\Psi''_{g,h} := \Psi_{gh}^* \circ \Psi_{g,h} \circ (\Psi_g \otimes_B \Psi_h)$, for $g,h \in G$. 
A straightforward computation, analogously to the previous one, shows that $d_{\B'} \Psi'_{g,h,k} = d_{\B} \Psi_{g,h,k}$.
\end{proof}

\begin{defi}
Let $G$ be a discrete group, let $B$ be a unital C\Star algebra, and let $\psi:G \to \Pic(B)$ be a group homomorphism.
The element 
\begin{align*}
    \chi(\psi) := [d_\B \Psi] \in H^3(G,UZ(B))_{\psi}
\end{align*}
is called the \emph{characteristic class} of $\psi$.
\end{defi}

\begin{teo} 
\label{teo:H^3}
Let $G$ be a discrete group, let $B$ be a unital C\Star algebra, and let $\psi:G \to \Pic(B)$ be a group homomorphism.
$\Ext(G,B,\psi)$ is nonempty if and only if $\chi(\psi)$ vanishes.
\end{teo}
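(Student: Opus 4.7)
The plan is to establish both implications by directly leveraging the fact that $\chi(\psi)$ measures the obstruction to the associativity condition~\eqref{eq:assoc}.

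For the forward direction, suppose $\Ext(G,B,\psi)\neq\emptyset$ and fix a representative $(\B,G,\pi)$. By Remark~\ref{rem:fs:induced}, the family $(B_g,m_{g,h})_{g,h\in G}$, where $m_{g,h}$ is the restricted multiplication of $\B$, is a factor system for $\psi$ and, in particular, an admissible choice of data with which to compute $\chi(\psi)$. Since it satisfies~\eqref{eq:assoc}, we obtain $d_\B m_{g,h,k}=\id_{B_{ghk}}$ for all $g,h,k\in G$; under the identification of Lemma~\ref{autME}, this says that the 3-cochain $d_\B m$ is identically $1_B$, so it represents the neutral element of $Z^3(G,UZ(B))_\psi$. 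Because $\chi(\psi)$ is independent of the chosen data, we conclude that $\chi(\psi)=[d_\B m]=0$.

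For the converse, assume $\chi(\psi)=0$ and pick any admissible family $(B_g,\Psi_{g,h})$ as in the paragraph introducing $\chi(\psi)$. Then $d_\B\Psi\in Z^3(G,UZ(B))_\psi$ is a coboundary, so there exists $\varpi\in C^2(G,UZ(B))$ with $d_\B\Psi=d_\psi\varpi$. I would then define
\[
\Psi'_{g,h}:=\varphi_{B_{gh}}(\varpi(g,h))\circ\Psi_{g,h}\qquad(g,h\in G).
\]
Normalization of $\varpi$ yields $\Psi'_{e,e}=\id_B$ and preserves the property that $\Psi'_{e,g}$ and $\Psi'_{g,e}$ implement the left and right actions of $B$ on $B_g$. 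A direct computation, parallel to the manipulation performed in the proof of Lemma~\ref{lem:cocycles}(ii) and using Lemma~\ref{isoME} to move scalars across $\otimes_B$, produces the key identity
\[
d_\B\Psi'_{g,h,k}\;=\;d_\B\Psi_{g,h,k}\cdot(d_\psi\varpi)(g,h,k)^{-1}\;=\;\id_{B_{ghk}}.
\]
Thus $(B_g,\Psi'_{g,h})_{g,h\in G}$ is a factor system for $\psi$, and Theorem~\ref{teo:fs} produces a (discrete) saturated Fell bundle $\B((B_g,\Psi'_{g,h}))$ with unit fiber $B$ and Picard homomorphism $\psi$, whose equivalence class lies in $\Ext(G,B,\psi)$.

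The main obstacle is verifying the displayed identity in the converse direction, where one must track carefully how the corrective scalars $\varpi(g,h)$ propagate through the balanced tensor products so that the resulting discrepancy is exactly the coboundary $d_\psi\varpi$. This is essentially the bookkeeping already present in Lemma~\ref{lem:cocycles}(ii), applied one homological degree higher, and introduces no new ideas beyond the repeated use of the relation $su=\psi_{B_g}(u)s$ for $s\in B_g$ and $u\in UZ(B)$ to bring all scalars out to the front.
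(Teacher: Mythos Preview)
Your proposal is correct and follows essentially the same approach as the paper: both directions proceed exactly as you describe, using the induced factor system $(B_g,m_{g,h})$ for the forward implication and correcting an arbitrary choice $(B_g,\Psi_{g,h})$ by a $2$-cochain $\varpi$ to kill $d_\B\Psi$ for the converse, then invoking Theorem~\ref{teo:fs}. The only cosmetic discrepancy is a sign convention (the paper writes $d_\B\Psi=d_\psi\varpi^{-1}$ where you write $d_\B\Psi=d_\psi\varpi$), which amounts to replacing $\varpi$ by $\varpi^{-1}$ and does not affect the argument.
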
 
\begin{proof}
First, suppose that $\Ext(G,B,\psi)$ is nonempty, and let $(\B,G,\pi)$ be a Fell bundle representing a class in $\Ext(G,B,\psi)$. 
By Remark~\ref{rem:fs:induced}, $(\B,G,\pi)$ admits a factor system for $\psi$ of the form $(B_g,m_{g,h})_{g,h \in G}$.
Since the multiplication is associative, it follows that
$\chi(\psi)$ vanishes, proving the ``only if'' direction.

Conversely, suppose that $\chi(\psi)$, represented by a family $(B_g,\Psi_{g,h})_{g,h \in G}$ of the above form, vanishes.
Then there exists an element $\varpi \in C^2(G,UZ(B))$ such that $d_\B\Psi = d_{\psi} \varpi^{-1}$, which yields a family $(\Psi'_{g,h} := \varphi_{B_g}(\varpi(g,h)) \Psi_{g,h}: B_g \otimes_B B_h \to B_{gh})_{g,h \in G}$ of Morita equivalence $B$-bimodule isomorphisms satisfying $d_\B\Psi'_{g,h,k} = \id_{ghk}$ for all $g,h,k \in G$, as is straightforwardly checked.
Consequently, $(B_g,\Psi'_{g,h})_{g,h \in G}$ is a factor system for $\psi$, and hence $\Ext(G,B,\psi)$ is nonempty by Theorem~\ref{teo:fs}.
This establishes the ``if'' direction, completing the proof.
\end{proof} 

We conclude this section with the introduction of the following convenient terminology:

\begin{defi}
\label{def:pic}
Let $G$ be a discrete group and let $B$ be a unital C\Star algebra.
A group homomorphism $\psi:G \to \Pic(B)$ is called a \emph{Picard homomorphism} if $\chi(\psi)$ vanishes, \ie, it admits a Fell bundle over $G$ with unit fiber $B$ and Picard homomorphism $\psi$.
\end{defi}

\subsection{Classification of Fell bundles: The topological case}
\label{sec:classification_top}

In this section, we address the problem of classifying saturated Fell bundles over locally compact groups.
Guided by Corollary~\ref{cor:class}, our strategy is to fix a topology and classify Fell bundles whose underlying Banach bundle structures are equivalent with respect to it.

In what follows, let $G$ be a locally compact group, unless explicitly stated otherwise, and let $B$ be a unital C\Star algebra.
However, we will sometimes consider $G$ as a discrete group to apply results from Section~\ref{sec:construction}. 
It should be clear from the context when $G$ is considered as discrete, or when it does not affect the discussion.
We commence as with the following observation:

\begin{lemma}
\label{lem:cont_mod_str}
Let $(\B,G,\pi)$ be a Fell bundle.
The induced map $G \times UZ(B_e) \to UZ(B_e)$, defined by $(g,z) \mapsto \mu(\psi_\B(g))(z)$, is continuous (see the discussion preceding Definition~\ref{def:equivalence}).
Furthermore, this map depends only on the equivalence class of $(\B,G,\pi)$.
\end{lemma}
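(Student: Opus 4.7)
The plan is to prove joint continuity locally by using saturation to extract an explicit formula for the induced map in terms of continuous sections and the Fell bundle operations. The second assertion is essentially formal.

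First I would fix an arbitrary $(g_0,z_0) \in G \times UZ(B_e)$ and show continuity there. Since $(\B,G,\pi)$ is saturated, Remark~\ref{rem:fb:unital} produces finitely many elements $s_1,\ldots,s_n \in B_{g_0}$ with $\sum_{i=1}^n s_i s_i^* = 1_{B_e}$. By the existence of enough sections (Remark~\ref{rem:enough}), I lift each $s_i$ to a continuous section $\sigma_i \in \Gamma(G,\B)$ with $\sigma_i(g_0)=s_i$. Using continuity of the multiplication and the involution (axioms~\ref{cond:fb1} and~\ref{cond:fb3}), the map $a: G \to B_e$ defined by $a(g) := \sum_{i=1}^n \sigma_i(g)\sigma_i(g)^*$ is continuous, with $a(g_0) = 1_{B_e}$. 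Since the invertible elements of $B_e$ form an open set on which inversion is continuous, there is a neighborhood $U$ of $g_0$ where $a(g)$ is invertible and $g \mapsto a(g)^{-1}$ is continuous.

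Second, I would invoke Lemma~\ref{isoME}: the element $w(g,z) := \mu(\psi_\B(g))(z) = \psi_{B_g}(z) \in UZ(B_e)$ is uniquely characterized by the identity $w(g,z)\cdot s = s\cdot z$ for every $s \in B_g$. Applying this with $s = \sigma_i(g)$, multiplying on the right by $\sigma_i(g)^*$, and summing over $i$ yields
\begin{align*}
    w(g,z)\cdot a(g) \;=\; \sum_{i=1}^n \sigma_i(g)\, z\, \sigma_i(g)^{*},
\end{align*}
valid for every $g \in G$ and $z \in UZ(B_e)$. Hence, on $U \times UZ(B_e)$,
\begin{align*}
    w(g,z) \;=\; \Bigl(\sum_{i=1}^n \sigma_i(g)\, z\, \sigma_i(g)^{*}\Bigr)\, a(g)^{-1}.
\end{align*}
The right-hand side is jointly continuous in $(g,z)$: the sections $\sigma_i$ are continuous, the multiplication and involution on $\B$ are continuous, each product $\sigma_i(g)\, z\, \sigma_i(g)^{*}$ lies in $B_e$ by axiom~\ref{cond:fb1}, and $g \mapsto a(g)^{-1}$ is continuous on $U$. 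Joint continuity at $(g_0,z_0)$ follows, and since $(g_0,z_0)$ was arbitrary, the map is continuous on all of $G \times UZ(B_e)$.

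For invariance under equivalence, I would observe that if $\Psi: \B^1 \to \B^2$ is an equivalence of saturated Fell bundles with unit fiber $B$, then each fiberwise restriction $\Psi_g: B^1_g \to B^2_g$ is a Morita equivalence $B$-bimodule isomorphism, so $[B^1_g] = [B^2_g]$ in $\Pic(B)$ for every $g \in G$. Thus $\psi_{\B^1} = \psi_{\B^2}$, and the induced maps on $UZ(B)$ coincide. The only delicate point in the whole argument is recognizing the right local formula for $w(g,z)$; once the characterization $w(g,z)\, s = s\, z$ is combined with a saturation-produced ``partition of $1_{B_e}$'' by elements of $B_{g_0}$, continuity reduces to the continuity of the Fell bundle operations together with continuity of inversion in $B_e$.
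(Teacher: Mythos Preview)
Your argument is correct and follows essentially the same route as the paper: fix $g_0$, use saturation to write $1_{B_e}$ as a finite sum of left inner products of elements of $B_{g_0}$, extend these to continuous sections, and then observe that the characterizing identity $w(g,z)\,s = s\,z$ for $s \in B_g$ yields a local formula $w(g,z) = F(g,z)\cdot a(g)^{-1}$ with $F$ and $a$ manifestly continuous. The paper phrases the same computation in terms of the inner-product maps ${}_{B_e}\langle\,\cdot\,,\,\cdot\,\rangle$ and cites \cite[Lem.~A.1]{SchWa17} for the frame $\sum_k x_k x_k^* = 1_{B_e}$, whereas you cite Remark~\ref{rem:fb:unital}; note that the latter literally gives only $1_{B_e} = \sum_i s_i t_i$ with $t_i \in B_{g_0^{-1}}$, so you should either invoke polarization (or the cited lemma) to reach the self-adjoint form $\sum_i s_i s_i^*=1_{B_e}$, or simply run your argument with two families of sections, which works equally well since the identity $w(g,z)\,\sigma_i(g)=\sigma_i(g)\,z$ does not require the right factor to be $\sigma_i(g)^*$.
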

\begin{proof}
    %It is clear that map depends only on the equivalence class of $(\B,G,\pi)$.
    Let $g_0 \in G$.
    We show that the map in the lemma is continuous on a set of the form $U \times UZ(B_e)$ for some open neighbourhood $U$ of $g_0$.
    By~\cite[Lem.~A.1]{SchWa17}, there exist elements $x_1,\ldots,x_n \in B_{g}$ such that
    \begin{align*}
        \sum^n_{k=1} {}_{B_e}\langle x_k,x_k \rangle = 1_{B_e}.
    \end{align*}
    Since $(\B,G,\pi)$ has enough sections (see Remark~\ref{rem:enough}), there exist continuous sections $\sigma_k$ of $\pi:\B \to G$, for each $1\leq k \leq n$, such that $\sigma_k(g_0)=x_k$.
    Using these, the maps
    \begin{gather*}
        f: G \to B_e,
        \qquad
        f(g) := \sum^n_{k=1} {}_{B_e}\langle m(\sigma_k(g)),\sigma_k(g) \rangle
        \shortintertext{and}
        F: G \times B_e \to B_e,
        \qquad
        F(g,b) := \sum^n_{k=1} {}_{B_e}\langle m(\sigma_k(g),b),\sigma_k(g) \rangle
    \end{gather*}
    are defined.
    Their continuity follows from Remark~\ref{rem:fb:innerprod} and Remark~\ref{rem:fb:multi_cont}.
    Furthermore, for $g \in G$ and $z \in UZ(B_e)$, it is easily verified that $F(g,z) := \mu(\psi_\B(g))(z) \cdot f(g)$.
%    The openness of $B_e^\times$, together with $f(g_0) =  1_{B_e}$ implies the existence of an open neighbourhood $U$ around $g_0$ such that $f(U) \subseteq B_e^\times$.
    With $B_e^\times$ being open and $f(g_0) =  1_{B_e}$, one can find an open neighbourhood $U$ around $g_0$ such that $f(U) \subseteq B_e^\times$.
    In consequence, on $U \times UZ(B_e)$, the equality $\mu(\psi_\B(g))(z) = F(g,z) \cdot (f(g))^{-1}$ holds, and hence the claim follows, as the right-hand side is a product of continuous maps.
    This completes the proof of the lemma.
\end{proof}

Noteworthy is the fact that Lemma~\ref{lem:cont_mod_str} enables the application of topological group~cohomology, specifically 
\begin{align*}
H^p_c(G,UZ(B))_{\psi} := H^p_c(G,UZ(B))_{\mu \circ \psi}
\end{align*}
for all positive integers $p$ (see Section~\ref{sec:topcoho}).

Next, we present the key concept of this section, which serves as the topological counterpart to Definition~\ref{def:pic}:

\begin{defi}
\label{def:pic_cont}
Let $G$ be a locally compact group and let $B$ be a unital C\Star algebra.
A Picard homomorphism $\psi:G \to \Pic(B)$ is called \emph{topological} if it admits a Fell bundle over $G$ with unit fiber $B$ and Picard homomorphism $\psi$.
\end{defi}

Clearly, a Picard homomorphism $\psi:G \to \Pic(B)$ is topological if and only if there exists a representative $(\B,G,\pi)$ in $\Ext(G,B,\psi)$ that admits a fundamental family of sections of $\pi:\B \to G$ (see Section~\ref{sec:construction_fb}).
It should be emphasized, however, that there may exist multiple nonequivalent fundamental families of sections of $\pi:\B \to G$, in the sense that the topologies they induce on $\B$ may not be equivalent.
The equivalence class of a fundamental family $\Gamma$ of sections of a Fell bundle is denoted by $[\Gamma]$.
%To avoid ambiguity, we will specify the equivalence class of fundamental families, in symbol $\gamma$, that we intend to use.

\begin{example}
\label{ex:ce}
Consider the case where $B = \mathbb{C}$. 
Since $\Pic(\mathbb{C})$ is trivial (see ~\cite[Sec.~3]{BrGrRi77}), there exists only the trivial group homomorphism $\psi: G \to \Pic(\mathbb{C})$ for any locally compact group $G$. 
Fell bundles associated with $\psi$ are characterized by the property that every fiber is one-dimensional and are referred to as Fell line bundles~(see Example~\ref{ex:flb}).

In general, Fell line bundles are locally trivial (see, \eg,~\cite[Prop.~2.3]{Du74}). 
However, the trivial Fell line bundle also serves as an example (see Example~\ref{ex:trivialFB}). 
This demonstrates that Fell bundles with the same Picard homomorphism may nevertheless have inequivalent topologies.
\end{example}

Following this, let $\psi:G \to \Pic(B)$ be a topological Picard homomorphism, let $(\B,G,\pi)$ be a saturated Fell bundle with unit fiber $B$ and Picard homomorphism $\psi$, and let $\Gamma$ be a fundamental family of sections of $\pi:\B \to G$.

\begin{defi}
\label{def:ext_cont}
Under the above hypotheses, we define $\Ext(G,B,\psi,[\Gamma])$ as the set of equivalence classes of saturated Fell bundles over $G$ with unit fiber~$B$, Picard homomorphism $\psi$, and whose underlying Banach bundles are equivalent with respect to the topology determined by $\Gamma$ (see Definition~\ref{def:equivalence_bb} and Definition~\ref{def:equivalence_fb}).

The equivalence class of such a saturated Fell bundle is simply denoted by enclosing it in square brackets, thereby suppressing $[\Gamma]$ for the sake of notational simplicity.
\end{defi}

We proceed as follows:
Let $\psi:G \to \Pic(B)$ be a topological Picard homomorphism, and let $(\B,G,\pi)$ be a saturated Fell bundle with unit fiber $B$ and Picard homomorphism $\psi$.
Recall that the topology of $\B$ is uniquely determined by $\Gamma := \Gamma_c(G,\B)$ (see Remark~\ref{rem:fb_top}), and that $(\B,G,\pi)$ allows for a factor system for $\psi$ of the form $(B_g,m_{g,h})_{g,h \in G}$ (see Remark~\ref{rem:fs:induced}).
By Lemma~\ref{lem:autME_Z}, any other class in $\Ext(G,B,\psi)$ can be represented by a Fell bundle of the form $\B((B_g,(m_\omega)_{g,h})_{g,h \in G})$, where $(m_\omega)_{g,h} := \omega(g,h) m_{g,h}$ for some $\omega \in Z^2(G,UZ(B))_{\psi}$.
If $\omega \in Z^2_c(G,UZ(B))_{\psi}$, then is straightforwardly verified that $\Gamma_c(G,\B)$ endowed with the multiplication and involution defined for $\sigma,\sigma' \in \Gamma_c(G,\B)$ by
\begin{gather*}
	(\sigma \ast \sigma')(g):=\int_G \omega(h,h^{-1}g) \sigma(h) \sigma'(h^{-1}g) \, d\lambda(h)
	\\
    \shortintertext{and}
	\sigma^*(g):=\Delta (g^{-1}) \psi_{B_g}(\omega(g^{-1},g)) \sigma(g^{-1})^*
\end{gather*}
forms a fundamental family of sections of $\B((B_g,(m_\omega)_{g,h})_{g,h \in G})$, thereby endowing it with the structure of a Fell bundle (see Corollary~\ref{cor:fs}).
Here, $\lambda$ denotes a left Haar measure on $G$, and $\Delta$ denotes the corresponding modular function.
Notably, the Banach bundles underlying $(\B,G,\pi)$ and $\B((B_g,(m_\omega)_{g,h})_{g,h \in G})$ are equivalent, as follows easily from Lemma~\ref{lem:bb_cont}.
Conversely, consider the multiplication $m_\omega: \B \times \B \to \B$, defined for $s \in B_g$, for $g \in G$, and $t \in B_h$, for $h \in G$, by 
\begin{align}
\label{eq:m_omega}
    m_\omega(s,t) := (m_\omega)_{g,h}(s \otimes t) = \omega(g,h) m_{g,h}(s \otimes t).
\end{align}

\begin{lemma}\label{lem:cocycles_mult}
Under the above hypotheses, if $m_\omega: \B \times \B \to \B$ is continuous (see Equation~\eqref{eq:m_omega}), then $\omega$ is continuous as well, \ie, $\omega \in Z^2_c(G,UZ(B))_{\psi}$.
\end{lemma}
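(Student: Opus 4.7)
The plan is to fix an arbitrary point $(g_0, h_0) \in G \times G$ and exhibit an open neighbourhood on which $\omega$ agrees with a continuous $UZ(B)$-valued function. The strategy is a two-variable adaptation of the technique used in the proof of Lemma~\ref{lem:cont_mod_str}: one constructs two continuous $B$-valued maps $f, F$ on $G\times G$ satisfying the identity $F=\omega\cdot f$, and where $f$ is invertible at $(g_0,h_0)$.

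First, I would invoke~\cite[Lem.~A.1]{SchWa17}, applied to the Morita equivalence $B$-bimodule $B_{g_0 h_0}$, to obtain elements $y_1,\ldots,y_n \in B_{g_0 h_0}$ with $\sum_{k} {}_{B_e}\langle y_k, y_k \rangle = 1_B$. Invoking saturation (Remark~\ref{rem:fb:MSE}), each $y_k$ can be approximated in norm by a finite sum $z_k := \sum_i m(s_{k,i}, t_{k,i})$ with $s_{k,i} \in B_{g_0}$ and $t_{k,i} \in B_{h_0}$; since the left inner product is continuous (Remark~\ref{rem:fb:innerprod}) and $B^\times$ is open in $B$, these approximations can be chosen fine enough that $\sum_k {}_{B_e}\langle z_k, z_k \rangle$ still lies in $B^\times$. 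Using that $\B$ has enough sections (Remark~\ref{rem:enough}), I then extend each $s_{k,i}$ and $t_{k,i}$ to continuous sections $\sigma_{k,i}$ and $\tau_{k,i}$ of $\pi:\B \to G$. Define continuous maps $f, F: G \times G \to B$ by
\begin{align*}
    f(g,h) &:= \sum_{k, i, i'} {}_{B_e}\!\left\langle m(\sigma_{k,i}(g), \tau_{k,i}(h)), m(\sigma_{k,i'}(g), \tau_{k,i'}(h)) \right\rangle, \\
    F(g,h) &:= \sum_{k, i, i'} {}_{B_e}\!\left\langle m_\omega(\sigma_{k,i}(g), \tau_{k,i}(h)), m(\sigma_{k,i'}(g), \tau_{k,i'}(h)) \right\rangle.
\end{align*}
Continuity of $f$ follows from Remarks~\ref{rem:fb:multi_cont} and~\ref{rem:fb:innerprod} combined with continuity of $m$, while continuity of $F$ uses the same remarks together with the standing hypothesis that $m_\omega$ is continuous. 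By construction, $f(g_0, h_0) = \sum_k {}_{B_e}\langle z_k, z_k \rangle \in B^\times$, so openness of $B^\times$ yields an open neighbourhood $U \times V$ of $(g_0, h_0)$ on which $f$ takes values in $B^\times$.

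A short computation using the centrality of $\omega(g,h) \in UZ(B)$, the associativity of the multiplication in $\B$, and the sesquilinearity of the left inner product gives $F(g,h) = \omega(g,h) \cdot f(g,h)$ on all of $G \times G$. Consequently, on $U \times V$ we have $\omega(g,h) = F(g,h) \cdot f(g,h)^{-1}$, and since inversion is continuous on $B^\times$, this exhibits $\omega$ as a product of continuous maps on $U \times V$. Thus $\omega$ is continuous at $(g_0, h_0)$, and as this point was arbitrary, $\omega \in Z^2_c(G, UZ(B))_{\psi}$. The only delicate point is the initial approximation step, where one must guarantee that replacing the $y_k$ by truncated sums of products does not destroy invertibility of $\sum_k {}_{B_e}\langle z_k, z_k \rangle$; this is routine thanks to openness of $B^\times$ and continuity of the inner product.
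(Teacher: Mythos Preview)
Your proposal is correct and follows essentially the same approach as the paper's proof: both invoke \cite[Lem.~A.1]{SchWa17} to obtain a partition of unity in $B_{g_0h_0}$, approximate using saturation, extend to continuous sections, and then construct two continuous $B$-valued maps (your $f,F$; the paper's $F,F_\omega$) related by the factor $\omega$, concluding via openness of $B^\times$. The only notational differences are the names of the functions and the index sets.
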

\begin{proof}
    Let $g_0,h_0 \in G$.
    It suffices to prove that $\omega$ is continuous on some open neighbourhood of $(g_0,h_0)$.
    Our proof strategy is similar to that of Lemma~\ref{lem:cont_mod_str}:
    By~\cite[Lem.~A.1]{SchWa17}, there exist elements $x_1,\ldots,x_n \in B_{g_0h_0}$ such that
    \begin{align*}
        \sum^n_{k=1} {}_{B_e}\langle x_k,x_k \rangle = 1_{B_e}.
    \end{align*}
    Since $(\B,G,\pi)$ is saturated, for each $1\leq k \leq n$, there exist a finite index set $I_k$, along with elements $\{s^k_i\}_{i \in I_k} \subseteq B_{g_0}$ and $\{t^k_i\}_{i \in I_k} \subseteq B_{h_0}$, such that $\sum_{i \in I_k} m(s^k_i, t^k_i)$ approximates $x_k$ arbitrarily well.
    The openness of $B_e^\times$ guarantees that it is possible to choose the elements $\{s^k_i,t^k_i\}$
    %, $1 \leq k \leq n$, $ i \in I_k$, 
    such that the sum
    \begin{align*}
        \sum^n_{k=1} \sum_{i,j \in I_k} {}_{B_e}\langle m(s^k_i, t^k_i),m(s^k_j, t^k_j) \rangle
    \end{align*}
    is invertible.
    Given that $(\B,G,\pi)$ has enough sections (see Remark~\ref{rem:enough}), there exist continuous sections $\sigma^k_i$ and $\tau^k_i$ of $\pi:\B \to G$, for all $1\leq k \leq n$ and $i \in I_k$, such that $\sigma^k_i(g_0)=s^k_i$ and $\tau^k_i(h_0)=t^k_i$.
    Now, consider the map
    \begin{align*}
        F: G \times G \to B_e,
        &&
        F(g,h) := \sum^n_{k=1} \sum_{i,j \in I_k} {}_{B_e}\langle m(\sigma^k_i(g), \tau^k_i(h)),m(\sigma^k_j(g), \tau^k_j(h)) \rangle.
    \end{align*}
    It follows from Remark~\ref{rem:fb:innerprod} and Remark~\ref{rem:fb:multi_cont} that $F$ is continuous.
    Consequently, there exist open neighbourhoods $U$ of $g_0$ and $V$ of $h_0$ such that $F(U \times V) \subseteq B_e^\times$.
    Next, consider the map    
    \begin{align*}
        F_\omega: G \times G \to B_e,
        &&
        F_\omega(g,h) := \sum^n_{k=1} \sum_{i,j \in I_k} {}_{B_e}\langle m_\omega(\sigma^k_i(g), \tau^k_i(h)),m(\sigma^k_j(g), \tau^k_j(h)) \rangle.
    \end{align*}
    Using the continuity of $m_\omega$, a similar reasoning as for $F$ above shows that $F_\omega$ is continuous.
    Furthermore, for all $g,h \in G$, the relation $F_\omega(g,h) = \omega(g,h) F(g,h)$ is satisfied.
    On $U \times V$, where $F(g,h) \in B_e^\times$, this implies that
    \begin{align*}
        \omega_{\vert U \times V} = {F_\omega}_{\vert U \times V} \cdot (F_{\vert U \times V})^{-1}.
    \end{align*}
    Thus, $\omega$ is continuous on $U \times V$ as it is the product of continuous maps.
    This is the desired conclusion, and so the proof is complete.
\end{proof}

With the groundwork laid, we are now in a position to state the topological versions of Lemma~\ref{lem:cocycles} and Corollary~\ref{cor:class}. The proofs, relying on this groundwork, are straightforward and left to the reader.

\begin{lemma}
\label{lem:cocycles_top}
Let $G$ be a locally compact group, let $B$ be a unital C\Star algebra, and let $\psi:G \to \Pic(B)$ be a topological Picard homomorphism.
Moreover, let $(\B,G,\pi)$ be a saturated Fell bundle with unit fiber $B$ and Picard homomorphism $\psi$, and let $\Gamma$ be a fundamental family of sections of $\pi:\B \to G$.
Then the following assertions hold:
\begin{enumerate}[label={(\roman*)},ref=\ref{lem:autME}.{(\roman*)}]
\item
    Each class in $\Ext(G,B,\psi,[\Gamma])$ can be represented by a Fell bundle of the following form: $\B((B_g,\Psi_{g,h})_{g,h \in G})$ (see Corollary~\ref{cor:fs}).
\item
    Any other Fell bundle $\B((B_g,\Psi'_{g,h})_{g,h \in G})$ representing a class in $\Ext(G,B,\psi,[\Gamma])$ satisfies $\Psi'_{g,h} = \omega(g,h) \Psi_{g,h} := \varphi_{B_{gh}}(\omega(g,h)) \Psi_{g,h}$ (see Lemma~\ref{autME}), for $g,h \in G$, where $\omega \in Z^2_c(G,UZ(B))_{\psi}$.
\item
    The Fell bundles $\B((B_g,\Psi_{g,h})_{g,h \in G})$ and $\B((B_g,\omega(g,h)\Psi_{g,h})_{g,h \in G})$ are equivalent if and only if $\omega \in B^2_c(G,UZ(B))_{\psi}$.
\end{enumerate}
\end{lemma}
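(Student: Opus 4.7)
The plan is to follow the architecture of Lemma~\ref{lem:cocycles}, invoking its algebraic conclusions and upgrading every cochain that appears to a continuous one. Because every bundle in $\Ext(G,B,\psi,[\Gamma])$ shares, up to equivalence, the Banach-bundle topology fixed by $[\Gamma]$, I may and will transport all competing multiplications onto the single total space $\B$ and compare them there.

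For (i), any representative $(\B',G,\pi')$ of a class in $\Ext(G,B,\psi,[\Gamma])$ is Banach-bundle equivalent to $(\B,G,\pi)$; pulling back its multiplication yields a factor system of the form $(B_g,m_{g,h})_{g,h\in G}$ as in Remark~\ref{rem:fs:induced}, and Corollary~\ref{cor:fs} applied with $\Gamma$ reinstalls the prescribed topology. For (ii), Lemma~\ref{lem:autME_Z} already gives $\Psi'_{g,h}=\omega(g,h)\Psi_{g,h}$ with $\omega\in Z^2(G,UZ(B))_\psi$; because $m$ and $m_\omega$ (see Equation~\eqref{eq:m_omega}) are both continuous with respect to the common topology on $\B$, Lemma~\ref{lem:cocycles_mult} upgrades $\omega$ to a member of $Z^2_c(G,UZ(B))_\psi$.

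The content of (iii) demands a continuity analysis on both sides. For the ``if'' direction, given $\varpi\in C^1_c(G,UZ(B))$ with $\omega=d_\psi\varpi$, the algebraic proof of Lemma~\ref{lem:autME_B} provides the equivalence $\Psi_g:=\varphi_{B_g}(\varpi(g))$; to verify that $\Psi$ is an equivalence of Banach bundles I would invoke Lemma~\ref{lem:bb_cont} with $\Gamma_c(G,\B)$, which reduces the task to continuity of $g\mapsto \varpi(g)\sigma(g)$ for $\sigma\in\Gamma_c(G,\B)$, and this holds by continuity of $\varpi$ together with Remark~\ref{rem:fb:multi_cont}.

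The ``only if'' direction is the main obstacle. Given a topological equivalence $\Psi$, the algebraic proof produces $\varpi\in C^1(G,UZ(B))$ satisfying $\Psi_g=\varphi_{B_g}(\varpi(g))$ and $\omega=d_\psi\varpi$, and the task is to prove $\varpi$ is continuous at an arbitrary $g_0\in G$. I propose to adapt the recipe of Lemma~\ref{lem:cont_mod_str}: choose $x_1,\dots,x_n\in B_{g_0}$ with $\sum_{k=1}^n {}_{B_e}\langle x_k,x_k\rangle=1_{B_e}$, lift to continuous sections $\sigma_k\in\Gamma_c(G,\B)$ with $\sigma_k(g_0)=x_k$, and set
\begin{align*}
    f(g) := \sum_{k=1}^n {}_{B_e}\langle \sigma_k(g),\sigma_k(g)\rangle,
    \qquad
    F(g) := \sum_{k=1}^n {}_{B_e}\langle \Psi(\sigma_k(g)),\sigma_k(g)\rangle.
\end{align*}
Both are continuous by Remark~\ref{rem:fb:innerprod}, and left-linearity of the inner product yields $F(g)=\varpi(g)\,f(g)$. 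Since $f(g_0)=1_{B_e}\in B_e^\times$, one finds a neighbourhood $U$ of $g_0$ with $f(U)\subseteq B_e^\times$, whence $\varpi_{\vert U}=F_{\vert U}\cdot f_{\vert U}^{-1}$ is continuous as a product of continuous maps. This forces $\varpi\in C^1_c(G,UZ(B))$ and therefore $\omega\in B^2_c(G,UZ(B))_\psi$, completing the plan.
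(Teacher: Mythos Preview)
Your proposal is correct and follows precisely the route the paper intends: the paper omits the proof, stating that it is ``straightforward'' from the preceding groundwork (the discussion before the lemma, Lemma~\ref{lem:cocycles_mult}, and the technique of Lemma~\ref{lem:cont_mod_str}), and you have carried out exactly that programme---reducing to the algebraic Lemma~\ref{lem:cocycles}, invoking Lemma~\ref{lem:cocycles_mult} for (ii), and adapting the $f/F$ argument of Lemma~\ref{lem:cont_mod_str} for the ``only if'' half of (iii). One cosmetic point: in the ``if'' direction of (iii), the continuity of $g\mapsto\varpi(g)\sigma(g)$ follows directly from the continuity of the Fell-bundle multiplication $m:\B\times\B\to\B$ rather than from Remark~\ref{rem:fb:multi_cont}, which records the converse implication.
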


\begin{cor}\label{cor:class_top}
Let $G$ be a locally compact group, let $B$ be a unital C\Star algebra, and let $\psi:G \to \Pic(B)$ be a topological Picard homomorphism.
Moreover, let $(\B,G,\pi)$ be a saturated Fell bundle with unit fiber $B$ and Picard homomorphism $\psi$, and let $\Gamma$ be a fundamental family of sections of $\pi:\B \to G$.
Then the map
\begin{gather*}
	H^2_c(G,UZ(B))_{\psi} \times \Ext(G,B,\psi,[\Gamma]) \to \Ext(G,B,\psi,[\Gamma]),
    \\
	\left([\omega],[\B((B_g,\Psi_{g,h})_{g,h \in G})]\right) \mapsto 
    [\B((B_g,\omega \Psi_{g,h})_{g,h \in G})]
\end{gather*}
is a well-defined simply transitive action.
\end{cor}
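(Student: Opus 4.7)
The plan is to mirror the discrete argument of Corollary~\ref{cor:class}, substituting Lemma~\ref{lem:cocycles_top} for Lemma~\ref{lem:cocycles} throughout, and leveraging the construction preceding Lemma~\ref{lem:cocycles_top} to ensure that every operation performed remains within the topological class $[\Gamma]$. First, to show that the action is well defined on representatives, I would start with a class in $\Ext(G,B,\psi,[\Gamma])$ and use Lemma~\ref{lem:cocycles_top}(i) to represent it by a Fell bundle of the form $\B((B_g,\Psi_{g,h})_{g,h\in G})$ whose Banach bundle is equivalent to $(\B,G,\pi)$ with respect to the topology determined by $\Gamma$. Given $\omega \in Z^2_c(G,UZ(B))_{\psi}$, the discussion preceding Lemma~\ref{lem:cocycles_top} endows $\B((B_g,\omega \Psi_{g,h})_{g,h \in G})$ with a Fell bundle topology coming from the twisted convolution $*$-algebra structure on $\Gamma_c(G,\B)$; Lemma~\ref{lem:bb_cont} then guarantees that the underlying Banach bundle stays in $[\Gamma]$.

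Independence of the cocycle representative reduces, via Lemma~\ref{lem:cocycles_top}(iii), to the statement that $\omega \in B^2_c(G,UZ(B))_{\psi}$ implies equivalence of the twisted bundle with the original one. Independence of the Fell bundle representative is then obtained by choosing factor systems adapted to a Banach bundle equivalence and transporting the twist through the implementing family of Morita equivalence $B$-bimodule isomorphisms; the continuity of the transport is guaranteed by Lemma~\ref{lem:bb_cont} together with Remark~\ref{rem:fb:multi_cont}.

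Transitivity follows at once from Lemma~\ref{lem:cocycles_top}(i)--(ii): any two classes in $\Ext(G,B,\psi,[\Gamma])$ can be represented by Fell bundles sharing the same family $(B_g)_{g\in G}$, and their multiplication maps must then differ by a continuous $2$-cocycle $\omega \in Z^2_c(G,UZ(B))_{\psi}$, which provides the element of $H^2_c(G,UZ(B))_{\psi}$ sending one class to the other. Freeness follows from Lemma~\ref{lem:cocycles_top}(iii): if twisting by $\omega$ returns an equivalent bundle, then $\omega$ is already a continuous $2$-coboundary, so $[\omega]$ vanishes in $H^2_c(G,UZ(B))_{\psi}$.

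The main technical concerns have already been dissolved by Lemma~\ref{lem:cont_mod_str}, which ensures the induced $G$-module structure on $UZ(B)$ is continuous so that $H^2_c(G,UZ(B))_{\psi}$ is meaningful, and by Lemma~\ref{lem:cocycles_mult}, which guarantees that whenever a twisted multiplication is continuous the twist itself is a continuous cocycle. What remains is bookkeeping: verifying that the cocycle condition survives a change of representative, that the twisted convolution product on $\Gamma_c(G,\B)$ indeed forms a fundamental family for the new Fell bundle, and that the equivalences implementing coboundaries descend to Banach bundle equivalences preserving $[\Gamma]$. These are all routine consequences of Corollary~\ref{cor:fs} and Lemma~\ref{lem:bb_cont}, so no new ideas are required beyond what the discrete case already provides.
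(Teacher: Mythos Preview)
Your proposal is correct and follows exactly the route the paper intends: the paper explicitly states that the proofs of Lemma~\ref{lem:cocycles_top} and Corollary~\ref{cor:class_top} are ``straightforward and left to the reader'' once the groundwork (Lemma~\ref{lem:cont_mod_str}, the twisted-convolution construction preceding Lemma~\ref{lem:cocycles_mult}, and Lemma~\ref{lem:cocycles_mult} itself) is in place. Your breakdown into well-definedness, transitivity, and freeness via the three parts of Lemma~\ref{lem:cocycles_top}, together with the invocation of Lemma~\ref{lem:bb_cont} and Corollary~\ref{cor:fs} to keep everything in $[\Gamma]$, is precisely the bookkeeping the paper has in mind.
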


\begin{example}
Let $G$ be a locally compact group and let $(\B,G,\pi)$ be a Fell line bundle (see Example~\ref{ex:flb} and Example~\ref{ex:ce}).
By Corollary~\ref{cor:class_top}, the equivalence classes of saturated Fell bundles that are topologically equivalent to $(\B,G,\pi)$ are parametrized by $H^2_c(G, \mathbb{C})$.
For instance, if $G = \mathbb{R}$, then it is well-known that $H^2_c(\mathbb{R},\mathbb{T}) \cong \{0\}$.
In contrast, if $G = \mathbb{R}^2$, then $H^2_c(\mathbb{R}^2,\mathbb{T}) \cong \mathbb{R}$ (see, \eg,~\cite[Rem.~2.16]{Neeb07}).
\end{example}

\section{Crossed product bundles}
\label{sec:out}

In this section, we examine a simple yet intriguing class of Fell bundles:

\begin{defi}[\cf~{\cite[Chap.~VIII, Sec.~2.9]{Fell2}}]
A Fell bundle $(\B,G,\pi)$ with unital unit fiber is called a \emph{crossed product bundle} if each fiber $B_g$, for $g \in G$, contains a unitary element, \ie, an element $u_g \in B_g$ satisfying $u_g u_g^* = u_g^* u_g = 1_{B_e}$.

A, possibly noncontinuous, section $u:G \to \B$ of a crossed product bundle $(\B,G,\pi)$ is called \emph{unitary} if each $u_g := u(g) \in B_g$, for $g \in G$, is unitary.
\end{defi}

\begin{lemma}\label{lem:cross}
Let $(\B,G,\pi)$ be a crossed product bundle.
Then the following assertions hold:
\begin{enumerate}[label={(\roman*)},ref=\ref{lem:cross}.{(\roman*)}]
\item\label{cross_one}
    For each $g \in G$, $B_g = u_g B_e = B_e u_g$.
\item\label{cross_sat}
    For all $g,h \in G$, $B_g B_h = B_{gh}$.
    In particular, $(\B,G,\pi)$ is saturated.
\item\label{cross_out}
    The associated Picard homomorphism $\psi_\B : G \to Pic(B_e)$ takes values in $\Out(B_e)$.
\end{enumerate}
\end{lemma}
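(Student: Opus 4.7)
The plan is to handle the three assertions in sequence, leveraging the unitary $u_g \in B_g$ at each stage. For assertion (i), both inclusions $u_g B_e \subseteq B_g$ and $B_e u_g \subseteq B_g$ are immediate from axiom~\ref{cond:fb1}; for the reverse inclusions, I would write $s = 1_{B_e} s = (u_g u_g^*) s = u_g(u_g^* s)$ with $u_g^* s \in B_{g^{-1}} B_g \subseteq B_e$, and symmetrically $s = (s u_g^*) u_g$. Assertion (ii) then reduces to verifying that $u_g u_h$ is a unitary element of $B_{gh}$, which follows from axioms~\ref{cond:fb1} and~\ref{cond:fb4} together with associativity: $(u_g u_h)(u_g u_h)^* = u_g u_h u_h^* u_g^* = u_g u_g^* = 1_{B_e}$, and likewise on the other side. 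Applying (i) to this new unitary yields $B_{gh} = u_g u_h B_e = u_g(u_h B_e) = u_g B_h \subseteq B_g B_h \subseteq B_{gh}$, so all inclusions are equalities; saturation is then automatic.

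For assertion (iii), the strategy is to exhibit, for each $g \in G$, an automorphism $\alpha_g \in \Aut(B_e)$ such that $B_g \cong M_{\alpha_g}$ as Morita equivalence $B_e$-bimodules; by the injection $\Out(B_e) \hookrightarrow \Pic(B_e)$ described in Section~\ref{sec:pic}, this will place $\psi_\B(g) = [B_g]$ inside $\Out(B_e)$. The natural candidate is the inner automorphism $\alpha_g(b) := u_g b u_g^*$, which maps $B_e$ into $B_g B_e B_{g^{-1}} \subseteq B_e$ by~\ref{cond:fb1} and is a $*$-automorphism thanks to the relations $u_g u_g^* = u_g^* u_g = 1_{B_e}$. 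Using the identification $B_g = B_e u_g$ from (i), I would then define $\Phi_g : M_{\alpha_g} \to B_g$ by $\Phi_g(b) := b u_g$ and verify the four compatibilities: left $B_e$-linearity is immediate; the twisted right action matches because $(b u_g) c = b(u_g c u_g^*) u_g = (b \alpha_g(c)) u_g$; the left inner product matches because $(b u_g)(c u_g)^* = b c^*$; and the right inner product matches because $(b u_g)^*(c u_g) = u_g^* b^* c u_g = \alpha_g^{-1}(b^* c)$, agreeing with the formula $\langle b, c \rangle_{B} = \alpha_g^{-1}(b^* c)$ on $M_{\alpha_g}$ given in Section~\ref{sec:pic}.

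The main obstacle, modest in scope, lies entirely in (iii): one must carefully align the twisted right $B_e$-module structure of $M_{\alpha_g}$, in which scalars act via $\alpha_g$, with the intrinsic right multiplication in $B_g$, and similarly for the right inner product. Once the choice $\alpha_g = \mathrm{Ad}(u_g)$ is in hand, each verification reduces to a short computation driven entirely by the unitarity of $u_g$, so no serious technical difficulty is expected.
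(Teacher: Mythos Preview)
Your proposal is correct and follows essentially the same approach as the paper: part~(i) is identical, and part~(iii) matches exactly, with your explicit verification of the four bimodule compatibilities filling in what the paper calls a ``straightforward computation.'' In part~(ii) you take a slightly cleaner route---observing that $u_g u_h$ is itself unitary in $B_{gh}$ and applying~(i) to it---whereas the paper inserts $u_h^* u_g^* u_g u_h$ next to $u_{gh}$; both arguments amount to the same idea.
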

\begin{proof}
\begin{enumerate}[label={(\roman*)}]
\item 
    Let $g \in G$ and let $u_g \in B_g$ be a unitary element.
    Clearly, $u_g B_e \subseteq B_g$ and $B_e u_g \subseteq B_g$.
    For the reverse inclusion, let $x \in B_g$. 
    Then $x = u_g u_g^* x \in u_g B_e$ and $x = x u_g^* u_g \in B_e u_g$.
\item
    Let $g,h \in G$, and let $u_g \in B_g$ and $u_h \in B_h$ be unitary elements.
    Item~\ref{cross_one} implies that $B_{gh} = u_{gh} B_e =  u_{gh} u_h^* u_g^* u_g u_h B_e \subseteq B_e B_g B_h B_e \subseteq B_g B_h \subseteq B_{gh}$.
\item
    Let $g \in G$, let $u_g \in B_g$ be a unitary element, and let $S_g \in \Aut(B_e)$ be defined by $S_g(b) := u_g b u_g^*$.
    A straightforward computation shows that the map $\Psi_g : M_{S_g} \to B_g$, defined by $\Psi_g(b) := bu_g$, is an isomorphism of Morita equivalence $B_e$-bimodules.
    Consequently, $[B_g] = [M_{S_g}] \in \Out(B)$ (see Section~\ref{sec:pic}).
\end{enumerate}
\end{proof}

\begin{lemma}
\label{lem:out}
Let $(\B,G,\pi)$ be a saturated Fell bundle with unital unit fiber such that its associated Picard homomorphism $\psi_\B : G \to Pic(B_e)$ takes values in $\Out(B_e)$.
Then $(\B,G,\pi)$ is a crossed product bundle.
\end{lemma}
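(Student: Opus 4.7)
The plan is to construct, for each $g \in G$, a unitary element of $B_g$ by transporting the canonical ``unit vector'' $1_B \in M_{\alpha_g}$ through a Morita equivalence $B_e$-bimodule isomorphism, and then verifying unitarity by unpacking how the Fell-bundle inner products on $B_g$ relate to the multiplication and involution.

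Fix $g \in G$. By hypothesis $\psi_\B(g) = [B_g] \in \Out(B_e)$, so there exists $\alpha_g \in \Aut(B_e)$ such that $[B_g] = [M_{\alpha_g}]$ in $\Pic(B_e)$. Choose an isomorphism $\Psi_g : M_{\alpha_g} \to B_g$ of Morita equivalence $B_e$-bimodules, and set
\begin{align*}
    u_g := \Psi_g(1_B) \in B_g.
\end{align*}
Recall (see Remark~\ref{rem:fb:MSE}) that the Morita equivalence bimodule structure on $B_g$ is given by ${}_{B_e}\langle s,t \rangle = st^*$ and $\langle s,t \rangle_{B_e} = s^*t$, where the multiplication and involution are those of the Fell bundle. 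A direct computation in $M_{\alpha_g}$ gives ${}_{B_e}\langle 1_B,1_B \rangle_{M_{\alpha_g}} = 1_B \cdot 1_B^{*} = 1_B$ and $\langle 1_B, 1_B \rangle_{M_{\alpha_g},B_e} = \alpha_g^{-1}(1_B^* \cdot 1_B) = 1_B$.

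Since $\Psi_g$ preserves both inner products, transporting these identities yields
\begin{align*}
    u_g u_g^{*} = {}_{B_e}\langle u_g, u_g \rangle_{B_g} = {}_{B_e}\langle 1_B, 1_B \rangle_{M_{\alpha_g}} = 1_{B_e},
\end{align*}
and analogously $u_g^{*} u_g = \langle u_g, u_g \rangle_{B_g,B_e} = 1_{B_e}$. Hence $u_g \in B_g$ is unitary. Performing this for every $g$ (and taking $u_e := 1_{B_e}$), we conclude that $(\B,G,\pi)$ is a crossed product bundle.

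There is essentially no serious obstacle here: the whole argument turns on the observation that $1_B$ is the canonical unitary element of $M_{\alpha_g}$ and that a Morita equivalence $B_e$-bimodule isomorphism transports this property to $B_g$ because it intertwines the left and right inner products — which, in the Fell-bundle setting, are nothing but $st^*$ and $s^*t$. The only point requiring care is keeping straight which inner product is being compared in which bimodule, but this is automatic from the definitions.
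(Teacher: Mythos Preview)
Your proof is correct, and in fact it is more direct than the paper's. Both arguments begin the same way: pick $\alpha_g \in \Aut(B_e)$ with $[B_g] = [M_{\alpha_g}]$, choose a Morita equivalence $B_e$-bimodule isomorphism $\Psi_g : M_{\alpha_g} \to B_g$, and set $v_g := \Psi_g(1_{B_e})$. The paper then takes a detour: it uses saturation and unitality (Remark~\ref{rem:fb:unital}) to argue that $B_e = v_g B_e v_g^*$ and $B_e = v_g^* B_e v_g$, concludes that $v_g$ is invertible, and finally passes to a unitary via a polar-decomposition trick, setting $u_g := v_g\,|v_g^{-1}(v_g^{-1})^*|^{1/2}$. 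You instead observe that $v_g$ is \emph{already} unitary, because $\Psi_g$ preserves both inner products by the paper's own definition of a Morita equivalence bimodule isomorphism (Section~\ref{sec:pic}), and $1_{B_e}$ satisfies ${}_{B_e}\langle 1_{B_e},1_{B_e}\rangle = \langle 1_{B_e},1_{B_e}\rangle_{B_e} = 1_{B_e}$ in $M_{\alpha_g}$. Your route is shorter and avoids the invertibility and polar-decomposition steps entirely; the paper's route would remain valid under the weaker assumption that $\Psi_g$ is merely a bounded bimodule isomorphism, but that extra generality is not needed here.
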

\begin{proof}
Let $g \in G$.
By assumption, $[B_g] \in \Out(B_e)$, meaning there exists $S(g) \in \Aut(B_e)$ and an isomorphism $\Psi_g: M_{S(g)} \to B_g$ of Morita equivalence $B_e$-bimodules (see Section~\ref{sec:pic}).
Define $v_g := \Psi_g(1_{B_e})$, so that $B_g = v_g B_e = B_e v_g$.
This gives $B_e = v_g B_e v_g^*$ and $B_e = v_g^* B_e v_g$ when combined with Remark~\ref{rem:fb:unital}.
In particular, $v_g$ is invertible.
Let $b := v_g^{-1} (v_g^{-1})^*$, and define $u_g := v_g \vert b \vert$, where $\vert b \vert := \sqrt{b^*b}$.
Clearly, $u_g \in B_g$.
Moreover, $u_g u_g^* = v_g b v_g^* = 1_{B_e}$ and $u_g^* u_g = \vert b \vert v_g^* v_g \vert b \vert = \vert b \vert b^{-1} \vert b \vert = 1_{B_e}$.
That is, $u_g \in B_g$ is unitary, which proves the claim.   
\end{proof}

\begin{example}
By Lemma~\ref{lem:out}, every Fell line bundle is a crossed product bundle.
\end{example}

We proceed by considering a crossed product bundle $(\B,G,\pi)$ and 
choosing for each $g \in G$ a unitary element $u_g \in B_g$, with $u_e = 1_{B_e}$.
The maps $S: G \to \Aut(B_e)$, defined by $S(g)(b) := u_g b u_g^*$, and $ \omega: G \times G \to UB_e$, defined by $ \omega(g,h) := u_g u_h u_{gh}^*$,
determine an element $(S,\omega) \in C^1(G,\Aut(B)) \times C^2(G,UB)$ satisfying the conditions
\begin{enumerate}[label=(C\arabic*)]
\item\label{cond:cm1}
    $S(g)(S(h)(b)) \omega(gh) = \omega(g,h) S(gh)(b)$ for all $g,h\in G$, and $b \in B_e$,
\item\label{cond:cm2}
    $S(g) (\omega(h,k)) \omega(g,hk) = \omega(gh,k) \omega(g,h)$ for all $g,h,k \in G$.
\end{enumerate}
Condition~\ref{cond:cm1} is called the \emph{twisted action} condition, while Condition~\ref{cond:cm2} is known as the \emph{twisted cocycle} condition.
It is easy to check that
\begin{align*}
   m_{g,h}(bu_g \otimes_{B_e} cu_h) = b S(g)(c) \omega(gh) u_{gh}
\end{align*}
for all $g,h \in G$ and $b,c \in B_e$.
In order to account for topological considerations, we introduce the following notion:

%\begin{defi}\label{def:loccontcochains}
%Let $G$ be a locally compact group and let $B$ be a unital C\Star algebra.
%\begin{enumerate}[label=(CC\arabic*)]
%\item 
%    $C^1_c(G,\Aut(B))$ is defined as the subset of 1-cochains in $C^1(G,\Aut(B))$ that are strongly continuous on an open neighbourhood of $e \in G$.
%    %satisfying $S(e) = 1_B$ and the existence of an open $e$-neighbourhood $U \subseteq G$ such that the restriction $S \vert_U : U \to \Aut(B)$ is strongly continuous.
%\item 
%    $C^2_c(G,UB)$ is defined as the subset of 2-cochains in $C^2(G,UB)$ that are continuous on an open neighbourhood of $(e,e) \in G^2$.
%\end{enumerate}
%\end{defi}

\begin{defi}\label{def:contcochains}
Let $G$ be a locally compact group and let $B$ be a unital C\Star algebra.
\begin{enumerate}[label=(CC\arabic*)]
\item 
    $C^1_c(G,\Aut(B))$ denotes the set of strongly continuous 1-cochains $G \to \Aut(B)$.
    %that are strongly continuous on an open neighbourhood of $e \in G$.
    %satisfying $S(e) = 1_B$ and the existence of an open $e$-neighbourhood $U \subseteq G$ such that the restriction $S \vert_U : U \to \Aut(B)$ is strongly continuous.
\item 
    $C^2_c(G,UB)$ denotes the set of continuous 2-cochains $G \times G \to UB$.
    %that are continuous on an open neighbourhood of $(e,e) \in G^2$.
\end{enumerate}
\end{defi}

%With this and Conditions~\ref{cond:cm1} and~\ref{cond:cm2}, the following statement is easily verified:

%\begin{lemma}
%    Let $(\B,G,\pi)$ be a crossed product bundle and let $u$ be a unitary section thereof.
%    If $u$ is continuous on an open neighbourhood of $e \in G$, then the associated pair $(S,\omega)$, defined prior to Definition~\ref{def:contcochains}, lies in $C^1_c(G,\Aut(B_e)) \times C^2_c(G,UB_e)$.
%\end{lemma}
%\begin{proof}   
%\end{proof}

Next, let $G$ be a locally compact group, let $B$ be a unital C\Star algebra, let $(\B,G,\pi)$ be the trivial Banach bundle over $G$ whose constant fiber is the Banach space underlying $B$, and let $(S,\omega) \in C^1_c(G,\Aut(B)) \times C^2_c(G,UB)$ satisfy Conditions~\ref{cond:cm1} and~\ref{cond:cm2}.
We define a multiplication $m: \B \times \B \to \B$ and an involution ${}^*:\B \to \B$ as follows:
\begin{gather}
    m((g,b),(h,c)) := (gh,b S(g)(c) \omega(g,h))
    \label{eq:multi_cp}
    \\
    (g,b)^* := (g^{-1},S(g^{-1})(b^* \omega(g^{-1},g)^*)) = (g^{-1},\omega(g^{-1},g)^* S(g^{-1})(b^*))
    \label{eq:invo}
\end{gather}
for all $b,c \in B$ and $g,h \in G$.
Notably, for each $g \in G$, $B_g \cong M_{S(g)}$ as a Morita equivalences $B$-bimodules (see~Section~\ref{sec:pic}).
Using the assumed properties, it is straightforward to show that $(\B,G,\pi)$ together with the above multiplication and involution is a crossed product bundle.
For instance, the multiplication is associative due to  Conditions~\ref{cond:cm1} and~\ref{cond:cm2}, while its continuity follows from the fact that $(S,\omega) \in C^1_c(G,\Aut(B)) \times C^2_c(G,UB)$.
The remaining conditions are verified similarly to those in Section~\ref{sec:construction}.
To document this:

\begin{teo}
\label{teo:cp}
Let $G$ be a locally compact group, let $B$ be a unital C\Star algebra, let $(\B,G,\pi)$ be the trivial Banach bundle over $G$ whose constant fiber is the Banach space underlying~$B$, and let $(S,\omega) \in C^1_c(G,\Aut(B)) \times C^2_c(G,UB)$ satisfy Conditions~\ref{cond:cm1} and~\ref{cond:cm2}.
Then $(\B,G,\pi)$ together with the multiplication and involution defined by~\eqref{eq:multi_cp} and~\eqref{eq:invo}, respectively, is a crossed product bundle, denoted by $(\B,G,\pi,(S,\omega))$.
\end{teo}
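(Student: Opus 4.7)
The plan is to verify, in turn, the algebraic Fell-bundle axioms~\ref{cond:fb1}--\ref{cond:fb7}, the continuity of the multiplication and involution, and the existence of a unitary element in each fiber.

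For the algebraic part, axioms~\ref{cond:fb1}--\ref{cond:fb3} are immediate from the formulas~\eqref{eq:multi_cp} and~\eqref{eq:invo} together with the fact that each $S(g)$ is isometric and each $\omega(g,h)$ is unitary. Associativity of the multiplication unfolds to precisely the combination of~\ref{cond:cm1} and~\ref{cond:cm2}. Axiom~\ref{cond:fb4} is a direct manipulation that uses~\ref{cond:cm1} to pass $S(g^{-1}h^{-1})$ past the $\omega$-factors and~\ref{cond:cm2} to collapse the resulting combination of $\omega$'s. Axiom~\ref{cond:fb5} follows by invoking the specialisation $S(g)(\omega(g^{-1},g)) = \omega(g,g^{-1})$ of~\ref{cond:cm2} (applied to the triple $(g,g^{-1},g)$ together with the normalisation of $\omega$) in conjunction with the relation $S(g)(S(g^{-1})(b)) = \omega(g,g^{-1})\,b\,\omega(g,g^{-1})^*$ coming from~\ref{cond:cm1}. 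For~\ref{cond:fb6} and~\ref{cond:fb7}, a short computation for $s=(g,b)$ yields
\[
s^*s = \bigl(e,\; \omega(g^{-1},g)^*\,S(g^{-1})(b^*b)\,\omega(g^{-1},g)\bigr),
\]
and both the positivity assertion and the C\Star identity then follow because $S(g^{-1})$ is a $^*$-automorphism of $B$ and $\omega(g^{-1},g)$ is unitary.

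Continuity of the projection is automatic since the underlying Banach bundle is trivial. Continuity of the multiplication and of the involution reduces, by the product topology on $\B = G \times B$, to continuity of the maps $(g,h,b,c) \mapsto b\,S(g)(c)\,\omega(g,h)$ and $(g,b) \mapsto \omega(g^{-1},g)^*\,S(g^{-1})(b^*)$. The second of these is straightforward once one notes that $g \mapsto \omega(g^{-1},g)$ is continuous as a composition of continuous maps. The first is the step that deserves care, and is where I expect the only genuine subtlety: the strong continuity hypothesis on $S$ supplies only pointwise norm-convergence $S(g_i)(c) \to S(g)(c)$ for fixed $c$, so to obtain joint continuity of $(g,c) \mapsto S(g)(c)$ one has to combine this with the uniform bound $\|S(g)\|=1$ and the norm-convergence $c_i \to c$ via a standard $3\varepsilon$ argument. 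This is the one place where the continuity hypotheses of Definition~\ref{def:contcochains} are used in an essential way.

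Finally, to witness the crossed-product-bundle property I propose the canonical unitary sections $u_g := (g, 1_B) \in B_g$. Using the identity $S(g)(\omega(g^{-1},g)) = \omega(g,g^{-1})$ noted above, together with unitarity of $\omega$, direct evaluation of $u_g u_g^*$ and $u_g^* u_g$ via~\eqref{eq:multi_cp} and~\eqref{eq:invo} yields $(e,1_B)$ in each case. Hence every fiber contains a unitary element, and $(\B,G,\pi,(S,\omega))$ is a crossed product bundle as claimed.
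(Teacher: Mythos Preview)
Your proposal is correct and follows the same route the paper indicates: the paper's own argument preceding Theorem~\ref{teo:cp} merely says that associativity comes from Conditions~\ref{cond:cm1} and~\ref{cond:cm2}, that continuity follows from $(S,\omega)\in C^1_c(G,\Aut(B))\times C^2_c(G,UB)$, and that ``the remaining conditions are verified similarly to those in Section~\ref{sec:construction}'', so your explicit verification of~\ref{cond:fb1}--\ref{cond:fb7} together with the joint-continuity argument and the unitary sections $u_g=(g,1_B)$ is exactly what the paper leaves to the reader. The only cosmetic remark is that the joint continuity of $(g,c)\mapsto S(g)(c)$ needs just two terms in the triangle inequality (using $\|S(g_i)\|=1$), not three.
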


\begin{rem}
Suppose we are in the setting of Theorem~\ref{teo:cp}.
\begin{enumerate}[label=(\arabic*)]
\item 
    The topology on $(\B,G,\pi)$ coincides with the topology induced by the \Star algebra of compactly supported sections of $\pi:\B \to G$, which can be intentionally conflated with the topology of compactly supported functions $f: G \to B$ (see Remark~\ref{rem:fb_top}).
\item 
    The crossed product bundle $(\B,G,\pi)$ has continuous unitary sections.
\end{enumerate}
\end{rem}

Let $(\B, G, \pi)$ be a crossed product bundle, and suppose that $u:G \to \B$ is a continuous section thereof. 
Define $\Psi_u$ as the map from $\B$ to the trivial Fell bundle
$G \times B$, defined for $s \in B_g$, for $g \in G$, by $\Psi_u(s) := (g,u_g^* s) = (g,\langle u_g,s \rangle_{B_e})$.
By Lemma~\ref{lem:bb_cont}, Remark~\ref{rem:fb:innerprod}, and the continuity of $u$, $\Psi_u$ is an equivalence between the underlying Banach bundles.
We thus record:

\begin{cor}
If $(\B, G, \pi)$ be a crossed product bundle with continuous unitary sections, then it is equivalent to $(\B,G,\pi,(S,\omega))$ for some $(S,\omega) \in C^1_c(G,\Aut(B)) \times C^2_c(G,UB)$ satisfying Conditions~\ref{cond:cm1} and~\ref{cond:cm2}.
\end{cor}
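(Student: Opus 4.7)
The plan is to lift the continuous unitary section $u$ to explicit data $(S,\omega) \in C^1_c(G,\Aut(B)) \times C^2_c(G,UB)$ satisfying Conditions~\ref{cond:cm1} and~\ref{cond:cm2}, and then verify that the Banach bundle equivalence $\Psi_u$ built in the paragraph preceding the corollary intertwines the multiplication and involution, identifying $(\B,G,\pi)$ with the model $(\B,G,\pi,(S,\omega))$ produced by Theorem~\ref{teo:cp}.

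First, after the harmless replacement $u_g \mapsto u_g u_e^*$ --- which is still continuous and fiberwise unitary --- I may assume $u_e = 1_B$. Setting $S(g)(b) := u_g b u_g^*$ and $\omega(g,h) := u_g u_h u_{gh}^*$, the grading of $\B$ together with unitarity of the $u_g$ places $S(g)$ in $\Aut(B)$ and $\omega(g,h)$ in $UB$, while the normalization $u_e = 1_B$ makes $(S,\omega)$ into a pair of normalized cochains. Conditions~\ref{cond:cm1} and~\ref{cond:cm2} are then verified by the same algebraic manipulations as in the paragraph preceding the corollary --- for the latter, by expanding $(u_g u_h) u_k = u_g (u_h u_k)$ via the identity $u_g u_h = \omega(g,h) u_{gh}$ and canceling the factor $u_{ghk}$.

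Second, I check continuity. Strong continuity of $S$ amounts to showing that $g \mapsto u_g b u_g^*$ is continuous into $B$ for each fixed $b \in B$; this follows from continuity of $u$, of the involution on $\B$ (Remark~\ref{rem:fb:inv_cont}), and of the multiplication (Remark~\ref{rem:fb:multi_cont}), applied to the continuous sections $g \mapsto u_g$ and the constant ``section'' $g \mapsto b$ at the unit fiber. The identical reasoning applied to $(g,h) \mapsto u_g u_h u_{gh}^*$ yields continuity of $\omega$, placing $(S,\omega) \in C^1_c(G,\Aut(B)) \times C^2_c(G,UB)$.

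Third, with $(\B,G,\pi,(S,\omega))$ now available via Theorem~\ref{teo:cp}, I examine the map $\Psi_u$. The paragraph preceding the corollary already establishes that $\Psi_u$ is an equivalence of the underlying Banach bundles, so it suffices to confirm that it intertwines the multiplicative and involutive structures. Using the canonical description of each $s \in B_g$ in the form $s = b_s u_g$ with $b_s \in B$, a direct computation exploiting the relations $u_g b = S(g)(b) u_g$ and $u_g u_h = \omega(g,h) u_{gh}$ rewrites $st$ and $s^*$ on $\B$ precisely into the form prescribed by Equations~\eqref{eq:multi_cp} and~\eqref{eq:invo} on $G \times B$, which is the compatibility required by Definition~\ref{def:equivalence_fb}.

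The main obstacle is the bookkeeping one of keeping the left/right conventions straight: the identification $s \leftrightarrow b_s$ implicit in $\Psi_u$ must be compatible with the convention baked into the construction of $(\B,G,\pi,(S,\omega))$, where $S$ acts by left conjugation via $u_g$. Once the correct slicing is fixed, multiplicativity and involutivity of $\Psi_u$ reduce to mechanical substitution using Conditions~\ref{cond:cm1} and~\ref{cond:cm2} together with unitarity of each $u_g$, and the continuity step for $S$ and $\omega$ follows transparently from the Fell bundle continuity axioms as discussed above.
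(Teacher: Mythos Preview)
Your proposal is correct and follows essentially the same route as the paper: extract $(S,\omega)$ from the continuous unitary section, invoke Theorem~\ref{teo:cp}, and use the map $\Psi_u$ from the paragraph preceding the corollary as the equivalence. You supply details the paper leaves implicit (the normalization $u_e=1_B$, the continuity of $S$ and $\omega$, and the explicit verification that $\Psi_u$ respects multiplication and involution), and your choice of slicing $s=b_s u_g$ (i.e.\ $b_s=s u_g^{*}$) is in fact the one that makes $\Psi_u$ multiplicative for the model of Theorem~\ref{teo:cp}; the paper's displayed formula $\Psi_u(s)=(g,u_g^{*}s)$ would require the mirror-image conventions, so your caution about left/right bookkeeping is well placed.
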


%\section{An example: Fell line bundles}
%\label{sec:example}

\section*{Acknowledgement}

This research was partially supported by the University of Warsaw Thematic Research Programme ``Quantum Symmetries''. 
In particular, the authors are grateful to Piotr Hajac for his exceptional hospitality and support.

\appendix

\bibliographystyle{abbrv}
\bibliography{short,RS}

\end{document}